\documentclass{article}

\usepackage{microtype}
\usepackage{graphicx}
\usepackage{subfigure}
\usepackage{booktabs} 
\usepackage{caption}
\usepackage{longtable}

\usepackage{hyperref}
\usepackage{url}

\usepackage[accepted]{icml2026}

\usepackage{amsmath}
\usepackage{amssymb}
\usepackage{amsfonts}
\usepackage{mathtools}
\usepackage{amsthm}
\usepackage{bbm}

\DeclareMathAlphabet{\mathbbold}{U}{bbold}{m}{n}

\usepackage[capitalize,noabbrev]{cleveref}

\usepackage{multirow}
\usepackage{varwidth}
\usepackage{rotating}
\makeatletter

\theoremstyle{plain}
\newtheorem{theorem}{Theorem}[section]
\newtheorem{proposition}[theorem]{Proposition}
\newtheorem{lemma}[theorem]{Lemma}
\newtheorem{corollary}[theorem]{Corollary}
\theoremstyle{definition}
\newtheorem{definition}[theorem]{Definition}
\newtheorem{assumption}[theorem]{Assumption}
\theoremstyle{remark}
\newtheorem{remark}[theorem]{Remark}

\renewcommand{\Re}{\mathbb{R}}

\newcommand{\bbH}{\mathbb{H}}
\newcommand{\Na}{\mathbb{N}} 
\newcommand{\bbQ}{\mathbb{Q}}

\newcommand{\Zr}{\mathbb{Z}}

\newcommand{\cA}{\mathcal{A}}

\newcommand{\cE}{\mathcal{E}}
\newcommand{\cF}{\mathcal{F}} 
\newcommand{\cH}{\mathcal{H}}

\newcommand{\cN}{\mathcal{N}}
\newcommand{\cO}{\mathcal{O}}

\newcommand{\g}{\boldsymbol{g}}
\newcommand{\x}{\boldsymbol{x}}
\newcommand{\y}{\boldsymbol{y}}

\newcommand{\s}{\boldsymbol{s}}

\newcommand{\bv}{\boldsymbol{v}}
\newcommand{\h}{\boldsymbol{h}}

\newcommand{\e}{\mathbf{e}}
\newcommand{\A}{\mathbf{A}}
\newcommand{\X}{\mathbf{X}}
\newcommand{\I}{\mathbf{I}}
\newcommand{\J}{\mathbf{J}}
\newcommand{\W}{\mathbf{W}}

\newcommand{\0}{\mathbf{0}}
\newcommand{\1}{\mathbf{1}}

\newcommand{\sign}{\operatorname{sign}}
\newcommand{\Proj}{\operatorname{Proj}}

\newcommand{\sync}{\operatorname{sync}}

\newcommand{\Diag}{\operatorname{Diag}}
\newcommand{\loc}{\operatorname{loc}}
\newcommand{\opt}{\operatorname{Opt}}

\renewcommand{\ge}{\geqslant}
\renewcommand{\leq}{\leqslant}
\renewcommand{\geq}{\geqslant}

\icmltitlerunning{MiP-CRIM: Minima Preserving Continuous Relaxation of Ising Model}

\begin{document}

\twocolumn[
\icmltitle{Local-Minima-Preserving Continuous Relaxation of Ising Problems}



  \icmlsetsymbol{equal}{*}

\begin{icmlauthorlist}
    \icmlauthor{Debraj Banerjee}{ee}
    \icmlauthor{Santanu Mahapatra}{ese}
    \icmlauthor{Kunal N. Chaudhury}{ee}
\end{icmlauthorlist}

\icmlaffiliation{ee}{Department of Electrical Engineering, Indian Institute of Science, Bangalore, India}
\icmlaffiliation{ese}{Department of Electronic Systems Engineering, Indian Institute of Science, Bangalore, India}

\icmlcorrespondingauthor{Debraj Banerjee}{debrajb@iisc.ac.in}

  \icmlkeywords{Ising Model, Combinatorial Optimization, Nonconvex Optimization, First Order Optimization}

  \vskip 0.3in
]



\printAffiliationsAndNotice{}  

\begin{abstract}
The generalized Ising problem captures a broad spectrum of hard combinatorial problems, including MAX-CUT, Number Partitioning (NPP), and Maximum Independent Set.
In this work, we consider the notion of one-flip local minima for this problem. We construct a polynomial relaxation and prove the landscape equivalence theorem: there exists a one-to-one correspondence between the local minima of the relaxation and the one-flip minima of the original Ising problem. This guarantee reduces the Ising problem to finding the local minima of a smooth function, allowing us to leverage gradient-based optimizers such as ADAM. We demonstrate that our method is scalable and it achieves strong performance across challenging benchmarks, including spin-glass models, MAX-CUT, and NPP.
\end{abstract}

\section{Introduction}
\label{sec:introduction}

Combinatorial optimization is fundamental to computer science and operations research, with far-reaching impact across social networks~\cite{yuan2018}, finance~\cite{roman2019}, cryptography~\cite{wang2020prime}, scheduling~\cite{rieffel2014}, electronic circuit design~\cite{barahona1988circuit}, and biosciences~\cite{kell2012,robert2021}. An important form of combinatorial optimization is Quadratic Unconstrained Binary Optimization (QUBO)~\cite{lucas2014}, which seeks to minimize a quadratic objective $\x^\top\! \A \x$ over binary vector $\x \in \{0,1\}^n$. Under the mapping $\s\leftarrow 2\x-1$, QUBO is equivalent to the Ising model~\cite{ising1925} with the Ising energy $\cE(\s) = -1/2  \s^\top\! \J \s - \h^\top \! \s$. This Ising model formulation provides a unified interface for a broad class of NP-hard problems~\cite{karp2009reducibility}, such as the spin-glass model~\cite{sherrington1975}, Maximum Cut (MAX-CUT), Maximum Independent Set, and the Number Partitioning Problem (NPP).


\textbf{Related Work.} 
Combinatorial problems such as the Ising problem have exponentially large search spaces and quickly become impractical for exact methods, such as branch-and-bound solvers (e.g., Gurobi~\cite{gurobi} and CP-SAT~\cite{cp-sat_lp}) at large scales. For the Ising model, most scalable solvers are based on classical heuristics like Simulated Annealing (SA)~\cite{kirkpatrick1983}, which navigate the discrete search space stochastically but often suffer from slow convergence or sampling poor local optima. On the other hand, continuous relaxation methods embed the discrete variables into a continuous domain. Examples include convex~\cite{beck2000global} and semidefinite relaxation~\cite{Luo2010}, notably the Goemans-Williamson algorithm~\cite{goemans1995} for the MAX-CUT problem (GW-SDP). Continuous relaxations also introduce spurious local minima that appear optimal in the continuous landscape but, after rounding, correspond to suboptimal or invalid solutions of the original combinatorial problem. Classical relaxations, such as the GW-SDP for the MAX-CUT problem, have provable approximation guarantees, but are computationally expensive and difficult to scale to large instances, whereas first-order methods~\cite{Panageas2019first,2015-kingma} scale efficiently but lack any discrete optimality guarantees (after mapping to the discrete domain).

More recently, both physics-inspired and neural-network-based solvers have gained attention for Ising optimization. Physics-inspired approaches include Boltzmann networks~\cite{goto2018boltzmann}, classical and quantum annealing~\cite{giues2002,Yavorsky2019,lee2025noise,neurosa25}, D-Wave quantum annealer~\cite{dwave_ocean}, free-energy based machine~\cite{fem2025} as well as Coherent Ising Machines (CIM)~\cite{inagaki2016,yamamoto2017,tiunov2019,honjo2021} and Simulated Bifurcation Machines (SBM)~\cite{puri2017,goto2019,goto2021,kanao2022,wang2023}. Message-passing-based iterative solvers have also shown strong asymptotic approximation guarantees on random (spin-glass) Ising models~\cite{montanari2025iamp}. Machine learning-based approaches have been proposed to handle large-scale Ising instances, including Graph Neural Networks (GNNs)~\cite{schuetz2022combinatorial} and convolutional architectures such as the Spring Ising Algorithm (SIA)~\cite{jiang2024}. Although promising, these methods often rely on solving complex dynamical systems, carefully tuned annealing schedules, or data-driven priors~\cite{bengio2021machine} that may not generalize and typically lack rigorous guarantees on solution quality.

\textbf{Our Work.} In this paper, we leverage scalable first-order optimizers (e.g., ADAM~\cite{2015-kingma}) and propose a continuous optimization framework. We introduce the \textbf{Mi}nima-\textbf{P}reserving \textbf{C}ontinuous \textbf{R}elaxation of \textbf{I}sing \textbf{M}odel (\textbf{MiP-CRIM}), which uses our \textit{landscape equivalence theorem} (Theorem~\ref{thm:S_hat_equals_S_star}) to find discrete local minima while retaining the scalability of gradient-based optimization. The Python code implementation is available at \href{https://github.com/Phymath0Masics/MiP-CRIM}{GitHub}.

Our approach builds upon important precedents in differentiable optimization. Notably, the recent pCQO-MIS framework~\cite{pCQO-MIS_icml25} elegantly demonstrated that for the Maximum Independent Set problem, local minimizers of a specialized quadratic objective correspond exactly to maximal independent sets. We generalize this by establishing a rigorous \textit{landscape equivalence theorem} using a class of attractor functions for general Ising problems. Our main contributions are:

\begin{itemize}
    \item \textbf{Attractor-based relaxation:} We introduce a physics-inspired family of polynomial regularizers (attractors) obeying some \textit{admissibility conditions}, and relax the Ising problem to a constrained continuous optimization problem.
    
    \item \textbf{Landscape equivalence theorem:} We use the Hamming distance to characterize discrete (one-flip) local optimality for generalized Ising problems, and under the attractor-based continuous relaxation, prove the \textit{landscape equivalence theorem}: the set of discrete local minima of the Ising problem is equal to the set of local minima of the continuous relaxation (upto scaling by a fixed constant).
    
    \item \textbf{Scalable gradient-based solver:} We validate the gradient-based solver on challenging large-scale benchmarks for problems including the general Ising model, MAX-CUT, and NPP. Our algorithm demonstrates superior performance compared to state-of-the-art heuristics, specialized exact solvers, and data-driven methods, effectively recovering high-quality solutions across diverse problem classes.

\end{itemize}

\section{Preliminaries}\label{sec:preliminaries}

\textbf{Notations:} We denote the set of real numbers by $\Re$, natural numbers by $\Na$, and integers by $\Zr$. Vectors are in bold lowercase letters (e.g., $\x, \s$) and matrices in bold uppercase letters (e.g., $\J, \W$). $\e_i$ is the vector with all zeros except the $i$-th element equal to $1$. For vectors $\x, \y \in \Re^n$, $x_i, y_i$ refer to the $i$-th elements and $\x\odot \y \in \Re^n$ is the Hadamard product with $i$-th element $x_iy_i$. $\Diag(\x)$ denotes the matrix with diagonal elements from $\x$ and non-diagonal elements equal to zero. $\I$ is the identity matrix and $\1$ is the vector of all ones (of appropriate dimension). The sign function $\sign(\cdot)$ operates element-wise on vectors, mapping non-negative values to $+1$ and negative values to $-1$. The projection operator onto a closed set $S$ is denoted by $\Proj_{S}(\cdot)$, and the indicator function by $\mathbbold{1}_{\{\cdot\}}$. We use $\bbH_n := \{-1,1\}^n$ to denote the hamming cube in $\Re^n$ i.e. the vertices of the $n$-dimensional unit hypercube, and $\lambda \bbH_n := \{-\lambda,\lambda\}^n$ for the scaled version. Their convex hulls are the hypercubes $\bbQ_n := [-1,1]^n$ and $\lambda \bbQ_n := [-\lambda,\lambda]^n$.

\textbf{Ising Model:} The Ising model was introduced by Lenz and Ising as a mathematical model for ferromagnetism~\cite{ising1925}. In this model, the dimensionless energy function is given by
\begin{equation}
\label{eq:IsingModel}
\cE(\s) = -\frac{1}{2}\s^\top\! \J \s - \h^\top \!\s
\end{equation}
where $\s\in \bbH_n$ is the spin vector, \(\J = \{J_{ij}\}\) is the coupling matrix with \(J_{ii}=0\) (no self-coupling), and $\h = \{h_i\}$ is the external field vector. 

Solving an Ising model in \eqref{eq:IsingModel} refers to finding a spin vector (ground state) $\s^* \in \bbH_n$ that minimizes the Ising energy $\cE(\x^*)$.



\begin{remark}\label{rem:hom_ising}
    By introducing an additional spin $s_{n+1}$, we can reformulate~\eqref{eq:IsingModel} as a homogeneous Ising model (with no external field) $-\frac{1}{2}\hat{\s}^\top \hat{\J}\hat{\s}$.
    The modified coupling $\{\hat{J}_{ij}\}$ is derived from $\{J_{ij}\}$ and $\{h_i\}$. Since the ground state of~\eqref{eq:IsingModel} can be inferred from this homogeneous model~(see Appendix \ref{sec:Reduction_to_the_Homogeneous_Model}), it suffices to work with the homogeneous model.
\end{remark}

\textbf{Reductions:}

Many hard combinatorial problems can be reformulated as an Ising problem~\cite{lucas2014}:
\begin{equation} 
\label{eq:IsingE}
\underset{\s \in  \bbH_n } {\min} \ \cE(\s)= - \frac{1}{2}\s^\top\! \J \s,
\end{equation}

For instance, given an undirected weighted graph $G=(V,E)$ with $|V|=n$ and weighted adjacency matrix $\W\in\Re^{n\times n}$, the MAX-CUT problem seeks a bipartition $V=A\cup B$ with $A\cap B=\emptyset$ that maximizes the cut value
\begin{equation}\label{eq:maxcut}
\mathrm{Cut}(A,B) := \sum_{(i,j)\in E} W_{ij}\,\mathbbold{1}_{\{i\in A,\, j\in B\}}.
\end{equation}
Using a spin encoding $\s\in \bbH_n$ where $s_i=+1$ denotes $i\in A$ and $s_i=-1$ denotes $i\in B$, MAX-CUT can be reformulated as~\eqref{eq:IsingE} where $\J=-(1/2)\W$ (see Appendix~\ref{sec:MAX-CUT_and_GW-SDP}).

Another example is the NPP, where a multi-set of numbers $N=\{a_1,\dots,a_n\}$ is given, and the goal is to find a partition $N=A\cup B$  with $A\cap B=\emptyset$ that minimizes the discriminant
\begin{equation}\label{eq:npp}
\mathrm{Disc}(A,B) := \Big|\sum_{a_i\in A} a_i \;-\; \sum_{a_i\in B} a_i\Big|.
\end{equation}
Let $\mathbf{a} := [a_1,\dots,a_n]^\top\in\Re^n$ and we encode the partition by $\s\in \bbH_n$ (e.g., $s_i=+1$ if $a_i\in A$ and $s_i=-1$ if $a_i\in B$). Then minimizing $\mathrm{Disc}(A,B)$ is equivalent to minimizing the quadratic form
\begin{equation}
\Big(\sum_{i=1}^n a_i s_i\Big)^2 \;=\; \s^\top\! (\mathbf{a} \mathbf{a}^\top)\s,
\end{equation}
and thus can be posed in the Ising form~\eqref{eq:IsingE} with
\begin{equation}\label{eq:ising_npp_J}
\J = -\big(\mathbf{a} \mathbf{a}^\top - \Diag(\mathbf{a}\odot \mathbf{a})\big).
\end{equation}

\section{Minima-Preserving Continuous Relaxation}

In this section, we first introduce our continuous relaxation to construct the continuous formulation in \eqref{eq:IsingH}. Next, we provide a rigorous theoretical analysis, establishing the equivalence between local optimality of \eqref{eq:IsingH} and discrete (one-flip) local optimality of \eqref{eq:IsingE}. Finally, we present our differentiable solver MiP-CRIM, which leverages the landscape equivalence guarantees alongside momentum-based gradient methods (such as ADAM).

\subsection{Continuous Reformulation}

The Ising energy $\cE(\s)$ is homogeneous of degree two, i.e., $\cE(\lambda\s)=\lambda^2\cE(\s)$ for all $\lambda>0$. This allows us to extend the discrete search space from the unit hypercube $ \bbH_n$ to the scaled hypercube $\lambda \bbH_n$. We consider the optimization problem
\begin{equation}\label{eq:IsingD}
    \underset{\x\in \lambda \bbH_n}{\min}\ \cE(\x).
\end{equation}
A optimizer $\x^*$ of~\eqref{eq:IsingD} immediately induces a solution of~\eqref{eq:IsingE} via the mapping $\s^*=\sign(\x^*)$. In other words, problems \eqref{eq:IsingE} and \eqref{eq:IsingD} are equivalent.

A natural continuous relaxation is obtained by replacing $ \lambda \bbH_n$ with its convex hull, namely the hypercube $\lambda \bbQ_n$, yielding the optimization problem
\begin{equation}\label{eq:IsingC}
    \underset{\x\in\lambda \bbQ_n}{\min}\ \cE(\x).
\end{equation}
However, solutions of~\eqref{eq:IsingC} are not guaranteed to lie on $ \lambda \bbH_n$, and hence $\sign(\x^*)$ need not correspond to a meaningful (let alone optimal) solution of the original Ising problem~\eqref{eq:IsingE}.

To bias the continuous relaxation toward the discrete set $\lambda \bbH_n$, we introduce a regularizer $\cA$ that serves as an \emph{attractor} to $\lambda \bbH_n$. With the attractor in place, we define the continuous-domain optimization problem
\begin{equation}\label{eq:IsingH}
    \underset{\x\in\lambda \bbQ_n}{\min}\ 
    \cH(\x)
    :=
    \cE(\x)+\cA(\x),
\end{equation}
We refer to the objective $\cH$ as the Hamiltonian of \eqref{eq:IsingH}. We use $\loc_{\lambda\bbQ_n}\!(\cH)$ for the set of local minima of~\eqref{eq:IsingH}.


\subsection{Universal Class of Attractors} Ideally an unbiased attractor must assign equal value to all $\x\in \lambda \bbH_n$, i.e., $\cA(\x)=\cA(\x')$ for all $\x,\x'\in\lambda \bbH_n$. But there are many possible choices of $\cA$ satisfying this condition. We focus on a permutation-invariant class that treats all coordinates (spins) symmetrically:
\begin{equation}\label{eq:attractor_class}
    \cA_f(\x):=\sum_{i=1}^n f_\theta(x_i),
\end{equation}
where $f_\theta:\Re\to\Re$ is a scalar function parametrized by $\theta$ and unbiased at $\pm \lambda$, so that
\begin{equation}\label{eq:attractor_f_class_prop_12}
    f_\theta(\lambda)=f_\theta(-\lambda).
\end{equation}
Some physics-inspired choices for $f_\theta$ are:
\begin{align}
   & f_{\theta_1} (x) = -\theta x^2, \quad \theta > 0 
    \label{eq:attractor_f_ex_1},
    \\
   & f_{\theta_2}(x)
    = \frac{\beta}{2k+2}x^{2k+2}-\frac{\alpha}{2k}x^{2k},
    \label{eq:attractor_f_ex_2}
    \\
   & f_{\theta_3}(x)
    = \frac{1}{\beta} \int_0^x \tanh^{-1}\Big(\frac{t}{\lambda}\Big) dt - \frac{\alpha}{2}x^2,
    \label{eq:attractor_f_ex_3}
\end{align}
where $\theta=(\alpha,\beta, k\in \Na)>0$ for $f_{\theta_2}$, and $\theta=(\alpha,\beta)>0,$ for $f_{\theta_3}$.

\begin{figure}
    \centering
    \includegraphics[width=1\linewidth]{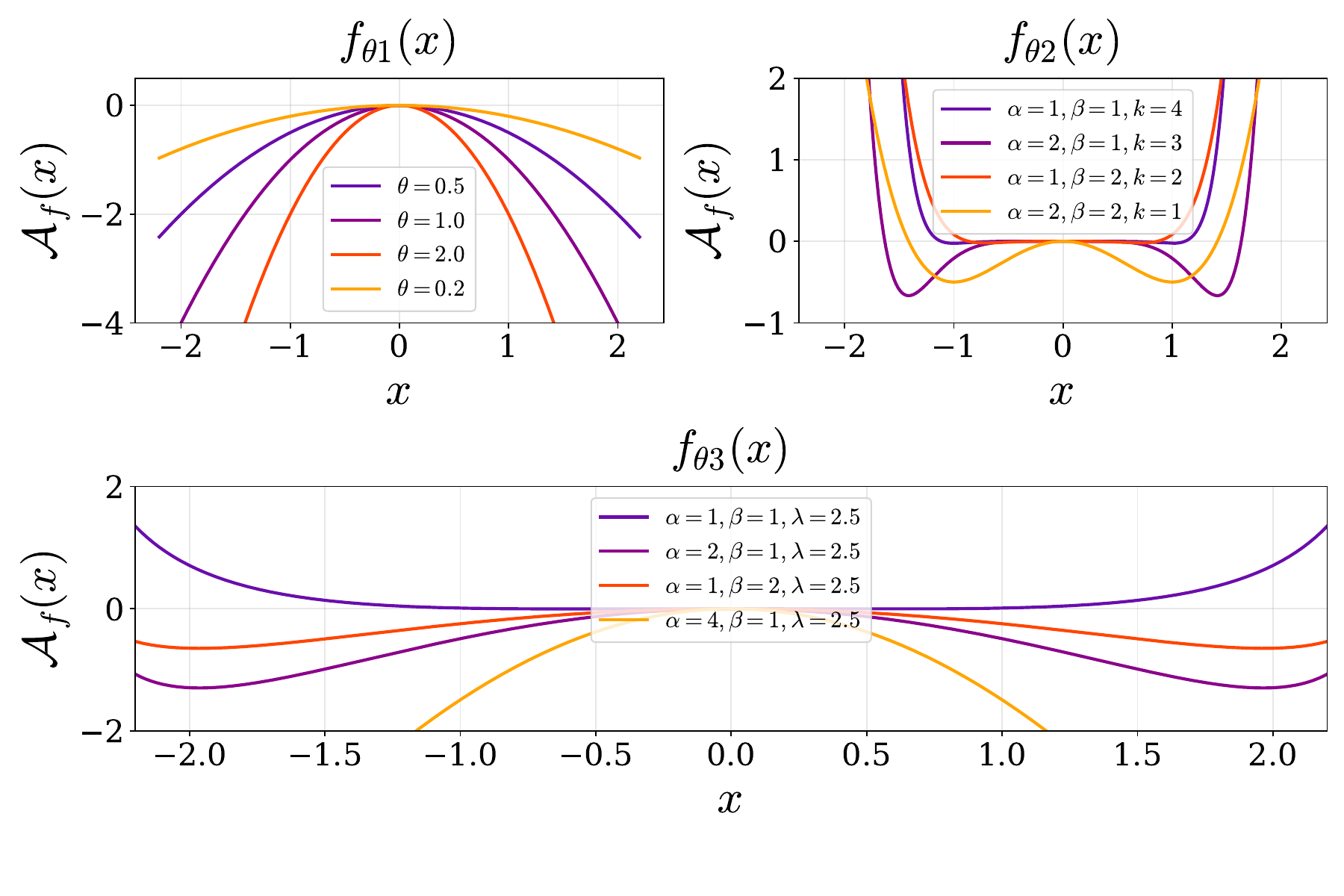}
    \caption{Plot of the class of attractor functions: $f_{\theta_1}$, $f_{\theta_2}$ and $f_{\theta_3}$ for various parameter values $\theta$. Both physics-inspired attractors $f_{\theta_2}$ and $f_{\theta_3}$ exhibit a double-well structure, with $\theta$ controlling the depth and curvature of the wells.}
    \label{fig:three_attractors_plots}
\end{figure}
Although the quadratic attractor $f_{\theta_1}$ is the simplest choice, the attractors in~\eqref{eq:attractor_f_ex_2} and~\eqref{eq:attractor_f_ex_3} are often more suitable: their double-well geometry (see \Cref{fig:three_attractors_plots}) places the minima \emph{exactly} at the hypercube corners $\lambda \bbH_n$ (e.g., for~\eqref{eq:attractor_f_ex_2} the wells occur at $\pm\lambda$ with $\lambda=\sqrt{\alpha/\beta}$). As a result, these regularizers naturally bias the local minima of $\cE$ toward the discrete set $\lambda \bbH_n$. 

The attractors also admit direct physical interpretations: $f_{\theta_1}$ works as self-coupling, $f_{\theta_2}$ is related to the Hamiltonian of nonlinear parametric oscillators~\cite{goto2018} at $k=1$, while $f_{\theta_3}$ arises from mean-field approximations commonly used in annealing-based Ising solvers~\cite{King2018} (see Appendix~\ref{sec:Physics_Inspired_Class_of_Attractor_Functions}).

\subsection{Landscape Equivalence Analysis}
All proofs for the results in this section are provided in Appendix~\ref{sec:Mathematical_Proofs}.

First we measure the proximity between two spin vectors $\s,\s' \in  \bbH_n$ via the Hamming distance~\cite{hamming1950}:
\begin{equation}\label{eq:hamming_dist}
     d_H(\s, \s') := \sum_{i=1}^n \mathbbold{1}_{\{s_i \neq s'_i\}},
\end{equation}
which is the number of spins on which $\s$ and $\s'$ disagree. Using this metric, we define the Hamming ball of radius $1$ around $\s^*$ as:
\begin{equation*}
    B(\s^*) = \big\{\s \in  \bbH_n : d_H(\s^*, \s) \leq 1\big\}.
\end{equation*}

Now we are ready to define the discrete (one-flip) local optimality~\cite{kammerdiner2010hamming,sato2024characterization} as follows.

\begin{definition}[One-flip local minimum]\label{def{one_flip_loc_minima}}
A vector (spin state) $\s^* \in  \bbH_n$ is said to be a local minimum of $\cE$ in the $1$-nearest-neighborhood sense if
\begin{equation}\label{eq:local_min_con_Ising0}
    \cE(\s^*) \leq \cE(\s) \,,\quad \forall \s \in B(\s^*).
\end{equation}
\end{definition}

\begin{remark}\label{rem:local_opt_maxcut_npp}
    For MAX-CUT, a locally optimal spin state $\s^*$ induces a partition $(A^*,B^*)$ in \eqref{eq:maxcut}, such that moving any single vertex from one side of the cut to the other \emph{cannot increase} the cut value. Equivalently, $(A^*,B^*)$ is a \emph{one-vertex locally optimal} cut. For NPP, a locally optimal spin vector $\s^*$ yields a partition $(A^*,B^*)$ in \eqref{eq:npp}, such that moving any single number from $A^*$ to $B^*$ (or vice versa) \emph{cannot decrease} the discriminant. Hence, $(A^*,B^*)$ is locally optimal under one-element exchanges.
\end{remark}

Although verifying whether $\s^*$ is a one-flip local minimum can be done efficiently in $\mathcal{O}(n)$ time, finding such $\s^*$ for an arbitrary coupling matrix $\J$ remains computationally challenging. Existing local-search methods are often problem-specific and rely on handcrafted discrete update rules tailored to particular problem formulations.

\begin{lemma}\label{lemma:one_flip_local_optima}
$\s^* \in \bbH_n$ is a one-flip local minimum of \eqref{eq:IsingE} if and only if
\begin{equation}\label{eq:local_min_con_Ising}
    \s^* \odot (\J \s^*) \geq \0.
\end{equation}
\end{lemma}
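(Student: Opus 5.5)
The plan is to compute the energy change produced by a single spin flip explicitly and read off the sign condition coordinate by coordinate. The Hamming ball $B(\s^*)$ consists of $\s^*$ itself together with the $n$ vectors $\s^{(i)} := \s^* - 2 s^*_i \e_i$, $i=1,\dots,n$, obtained by flipping the $i$-th spin. So \eqref{eq:local_min_con_Ising0} is equivalent to $\cE(\s^{(i)}) - \cE(\s^*) \geq 0$ for every $i$; the condition for $\s=\s^*$ holds trivially.

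The key step is the identity
\[
\cE(\s^{(i)}) - \cE(\s^*) = 2\, s^*_i\, (\J\s^*)_i .
\]
I would derive it by writing $\s^{(i)} = \s^* + \boldsymbol{\delta}$ with $\boldsymbol{\delta} = -2s^*_i\e_i$ and expanding the quadratic form:
\[
\cE(\s^*+\boldsymbol{\delta}) - \cE(\s^*) = -\tfrac12\left(\boldsymbol{\delta}^\top\J\s^* + \s^{*\top}\J\boldsymbol{\delta} + \boldsymbol{\delta}^\top\J\boldsymbol{\delta}\right).
\]
The term $\boldsymbol{\delta}^\top\J\boldsymbol{\delta} = 4(s^*_i)^2 J_{ii}$ vanishes because $J_{ii}=0$. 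The two cross terms give
\[
-\tfrac12\left(-2s^*_i(\J\s^*)_i - 2s^*_i(\J^\top\s^*)_i\right).
\]

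The only point needing care is the symmetry of $\J$. The paper implicitly treats $\J$ as symmetric, and in general only its symmetric part enters the quadratic form. So I would either assume $\J=\J^\top$, or first replace $\J$ by $(\J+\J^\top)/2$, which leaves $\cE$ unchanged and keeps a zero diagonal. With symmetry the two cross terms coincide and the expression collapses to $2s^*_i(\J\s^*)_i$.

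Since $2>0$, the requirement $\cE(\s^{(i)})\geq\cE(\s^*)$ for all $i$ is then exactly $s^*_i(\J\s^*)_i\geq 0$ for all $i$, which is \eqref{eq:local_min_con_Ising}. Both directions of the equivalence follow immediately from this identity, so there is no real obstacle beyond the symmetry and zero-diagonal bookkeeping.
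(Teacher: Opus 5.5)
Your proposal is correct and follows the same route as the paper: you compute $\cE(\s^{(i)})-\cE(\s^*) = 2s_i^*(\J\s^*)_i$ for each single-flip neighbour and read off the sign condition, and your expansion fills in what the paper leaves implicit (the zero diagonal kills $\boldsymbol{\delta}^\top\J\boldsymbol{\delta}$, and symmetry merges the cross terms). One small caution about your fallback of replacing $\J$ by $(\J+\J^\top)/2$: it only yields the condition for the symmetrized matrix, and the lemma as stated genuinely needs $\J=\J^\top$ (which the paper assumes in Lemma~\ref{lem:rigidity}), since an antisymmetric $\J$ gives $\cE\equiv 0$ yet can violate $\s\odot(\J\s)\geq\0$.
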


\begin{proof}
Let $\s^* \in  \bbH_n$ and fix an index $i\in\{1,\dots,n\}$. Define the one-flip neighbour $\s^{(i)} := \s^* - 2 s_i^* \e_i$.
Using~\eqref{eq:IsingE}, the energy difference is $\cE(\s^{(i)}) - \cE(\s^*) = 2 s_i^* (\J \s^*)_i$.
$\s^*$ is a one-flip local minimum iff $\cE(\s^{(i)}) - \cE(\s^*) \geq 0$ for all $i$. This directly implies $s_i^* (\J \s^*)_i \geq 0$. Moreover if $(\J\s^*)_i \neq 0$, then $s_i^*$ must agree with the sign of $(\J\s^*)_i$.
\end{proof}

Now using Lemma~\ref{lemma:one_flip_local_optima} we can define $\loc_{\bbH_n}\!(\cE)$ to be the set of discrete one-flip local minima of~\eqref{eq:IsingE} as
\begin{equation}
    \loc_{\bbH_n}\!(\cE) = \big\{\s \in  \bbH_n : \s \odot (\J \s) \geq \0\big\}. 
\end{equation}

We next quantify the \emph{margin} by which a corner point violates the discrete stationarity condition~\eqref{eq:local_min_con_Ising}. For $\J\neq \0$ we define $\gamma(\J)\equiv \gamma$ as
\begin{equation}\label{eq:define_gamma}
    \gamma
     \;=\;
    \min_{\s\in \bbH_n}
    \ \Big\{
        |(\J\s)_i| \ :\ s_i(\J\s)_i < 0
    \Big\}.
\end{equation}
By construction, $\gamma>0$ whenever there exists at least one violating configuration, and $\J \neq \0$ ensures such $\gamma$ exists (see Lemma~\ref{lem:rigidity} in Appendix~\ref{sec:Mathematical_Proofs}.
).

Computing $\gamma$ exactly via~\eqref{eq:define_gamma} is itself combinatorially hard in general (equivalent to solving $n$ NPP problems). However, for practical implementations using finite precision, we establish the following lower bound.

\begin{proposition}\label{prop:general_low_bound_gamma}
Let matrix $\J$ be stored with finite precision such that $\mathrm{lsb}(\J) = 10^{-b}$, where $b \in \Zr$ (lowest significant bit in base $10$). Then $\gamma(\J) \;\geq\; \mathrm{lsb}(\J)$.
\end{proposition}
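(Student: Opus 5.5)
The argument is a lattice argument: every entry of $\J$ lies on the grid $10^{-b}\Zr$, so every entry of $\J\s$ does too, and a nonzero grid point has magnitude at least $10^{-b}$. We interpret ``$\mathrm{lsb}(\J)=10^{-b}$'' to mean precisely this: each entry $J_{ij}$ is an integer multiple of $10^{-b}$, i.e. $J_{ij} = 10^{-b} m_{ij}$ with $m_{ij}\in\Zr$. That is the content of finite-precision storage with least significant digit in position $10^{-b}$.

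Fix any $\s\in\bbH_n$ and any index $i$. Since $s_j\in\{-1,1\}$, we have
\begin{equation*}
(\J\s)_i = \sum_{j} J_{ij}s_j = 10^{-b}\sum_j m_{ij}s_j,
\end{equation*}
and $\sum_j m_{ij}s_j\in\Zr$. Hence $(\J\s)_i\in 10^{-b}\Zr$ for every $\s$ and $i$.

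Now take any pair $(\s,i)$ that is admissible in the minimization \eqref{eq:define_gamma}, so $s_i(\J\s)_i<0$. This forces $(\J\s)_i\neq 0$. A nonzero element of $10^{-b}\Zr$ has absolute value at least $10^{-b}$, so $|(\J\s)_i|\ge 10^{-b}=\mathrm{lsb}(\J)$. The set over which \eqref{eq:define_gamma} minimizes is finite, since $\bbH_n$ is finite. It is also nonempty for $\J\neq\0$ by Lemma~\ref{lem:rigidity}, which we take as given. Therefore the minimum $\gamma(\J)$ is attained at one of these admissible pairs and inherits the bound $\gamma(\J)\ge\mathrm{lsb}(\J)$.

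The only step needing care is the modeling assumption itself: ``stored with finite precision'' must mean fixed-point, with all entries sharing the common grid $10^{-b}\Zr$. Under floating point, entries could have different exponents. In that case we would take $b$ to be the finest decimal position appearing among all entries, which again puts every entry in $10^{-b}\Zr$, and the same argument goes through.
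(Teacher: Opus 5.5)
Your proof is correct and is essentially the paper's argument. The paper first shows $\gamma(\mathbf{K})\ge 1$ for an integer matrix $\mathbf{K}$ and then rescales via $\gamma(c\J)=c\,\gamma(\J)$ with $\J=10^{-b}\mathbf{K}$, whereas you work directly on the grid $10^{-b}\Zr$; this is the same lattice observation done in one step.
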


\begin{remark}\label{rem:discuss_gamma}
In structured instances like MAX-CUT on unweighted graphs, where the non-zero elements of $\J$ are typically $\pm 1/2$ (or integers), we have the trivial lower bound $\gamma \geq 1/2$ (or $\gamma \geq 1$).
\end{remark}

We now discuss the necessary and sufficient conditions on the attractor family $\{\cA_f\}$ (equivalently, on $f_\theta$) under which the local minimizers of the Hamiltonian~\eqref{eq:IsingH} correspond exactly to the one-flip local minimizers of the discrete Ising model~\eqref{eq:IsingE}.

\begin{definition}[Admissible Attractors]
\label{def:admissible_attractor}
For a fixed hypercube length $\lambda > 0$ and a margin $\gamma > 0$ (defined in~\eqref{eq:define_gamma}), a family of twice-differentiable scalar functions $f_\theta$ is called \emph{admissible} if it satisfies:
\begin{align}\label{eq:attractor_class_final_1}
    &f_\theta(x) = f_\theta(-x) \quad (\text{Symmetry}), \\[2pt]\label{eq:attractor_class_final_2}
    &f_\theta''(x)  < 0, \qquad  (|x|<\lambda) \quad (\text{Concavity}), \\[2pt]\label{eq:attractor_class_final_3}
    &f_\theta'(\lambda) < 0, \quad (\text{Boundary Attraction}) \\[2pt]
    &f_\theta'(\lambda) + \lambda\gamma > 0. \label{eq:attractor_class_final_4} \quad (\text{Robustness})
\end{align}
\end{definition}

The symmetry condition in \eqref{eq:attractor_class_final_1} is already satisfied by $f_{\theta_1}, f_{\theta_2}$ and $f_{\theta_3}$. Now, to ensure that the local minimizer $\x^*$ of~\eqref{eq:IsingH} is not a spurious minima and it yields a well-defined discrete solution of \eqref{eq:IsingE} via the mapping $\sign(\x^*)$, $\x^*$ must lie on the hypercube corners $\lambda \bbH_n$. The following lemma gives a sufficient condition.

\begin{lemma}\label{lemma:S_hat_in_corner}
If $f_\theta$ satisfies \eqref{eq:attractor_class_final_2}, then every local minimizer of~\eqref{eq:IsingH} over $\lambda \bbQ_n$ lies on the hypercube corners $\lambda \bbH_n$, i.e.
\begin{equation}
    \loc_{\lambda\bbQ_n}\!(\cH)\subset \lambda \bbH_n.
\end{equation}
\end{lemma}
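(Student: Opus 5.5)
We argue by contradiction, using a one-dimensional restriction of $\cH$ along a coordinate direction. Let $\x^*\in\loc_{\lambda\bbQ_n}\!(\cH)$ and suppose some coordinate satisfies $|x_i^*|<\lambda$. Consider the univariate function
\begin{equation*}
\phi(t) := \cH(\x^* + t\e_i), \qquad t \in [-\lambda - x_i^*,\ \lambda - x_i^*],
\end{equation*}
which is the restriction of $\cH$ to the segment through $\x^*$ parallel to $\e_i$. This segment lies in $\lambda\bbQ_n$, and $t=0$ is an interior point of the interval because $|x_i^*|<\lambda$. Since $\x^*$ is a local minimizer of $\cH$ over $\lambda\bbQ_n$, the point $t=0$ must be a local minimizer of $\phi$ on an open interval around $0$.

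Next we compute $\phi$ explicitly. Because $J_{ii}=0$, the quadratic part $\cE(\x^*+t\e_i)$ is affine in $t$: the $t^2$ coefficient is $-\tfrac12 J_{ii}=0$. The attractor $\cA_f$ is separable, so only the term $f_\theta(x_i^*+t)$ varies with $t$. Hence
\begin{equation*}
\phi(t) = c_0 + c_1 t + f_\theta(x_i^*+t)
\end{equation*}
for some constants $c_0, c_1$, and
\begin{equation*}
\phi''(t) = f_\theta''(x_i^*+t).
\end{equation*}
By the concavity condition \eqref{eq:attractor_class_final_2}, $\phi''(0)=f_\theta''(x_i^*)<0$.

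This contradicts local minimality. At an interior local minimizer of a twice-differentiable function we need $\phi'(0)=0$ and $\phi''(0)\ge 0$. Concretely, Taylor's theorem with $\phi'(0)=0$ gives $\phi(t)=\phi(0)+\tfrac12\phi''(\xi)t^2$ for some $\xi$ between $0$ and $t$, and continuity of $\phi''$ gives $\phi''(\xi)<0$ for small $t$. So $\phi(t)<\phi(0)$ for small $t\neq 0$, and $\x^*$ is not a local minimizer. It follows that every coordinate satisfies $|x_i^*|=\lambda$, that is, $\x^*\in\lambda\bbH_n$.

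The only delicate step is making sure the second-order argument is legitimate. This requires $f_\theta''$ to be continuous, or at least the second-order necessary condition to hold for twice-differentiable functions; the latter is standard. It also requires the zero-diagonal assumption $J_{ii}=0$, which is exactly what makes $\cE$ affine along coordinate directions. If $f_\theta$ is merely twice differentiable, we would instead cite the standard second-order necessary condition $\phi''(0)\ge 0$, which holds without continuity of $\phi''$.
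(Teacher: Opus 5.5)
Your proof is correct and follows the paper's argument: both restrict to the $i$-th coordinate direction, use $J_{ii}=0$ to reduce the second derivative of $\cH$ there to $f_\theta''(x_i^*)<0$, and contradict the second-order necessary condition at a point interior in that coordinate. Your hedge about continuity of $f_\theta''$ is unnecessary, since the hypothesis gives $f_\theta''<0$ on all of $(-\lambda,\lambda)$, so $\phi''(\xi)<0$ holds directly for every $\xi$ sufficiently close to $0$.
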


\begin{remark}\label{rem:S_hat_in_corner}
A direct consequence is that all stationary points ($\x$ where $\nabla \cH(\x)=\0$) in the interior of $\lambda \bbQ_n$ are non-minimizing (in particular, saddle points), and can be avoided by using random initialization in first-order methods~\cite{Panageas2019first}. For the quadratic attractor $f_{\theta_1}$ assumption~\eqref{eq:attractor_class_final_2} trivially holds, for the quartic attractor $f_{\theta_2}$,
it holds if and only if $\alpha> 3\beta\lambda^2$ (for $k=1$), but for $f_{\theta_3}$ it doesn't hold for any $\theta > 0$.
\end{remark}


\begin{lemma}\label{lemma:S_star_subset_S_hat}
Let $f_\theta$ satisfies \eqref{eq:attractor_class_final_1} and \eqref{eq:attractor_class_final_3}, then every discrete one-flip local minimizer $\s^*\in \loc_{\bbH_n}\!(\cE)$ induces a local minimizer $\x^*=\lambda\s^*$ of~\eqref{eq:IsingH}. Equivalently, 
\begin{equation}
    \lambda \!\loc_{\bbH_n}\!(\cE)\subset \loc_{\lambda\bbQ_n}\!(\cH).
\end{equation}
\end{lemma}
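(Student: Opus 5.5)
The plan is a first-order argument at the corner. Since $\lambda\bbQ_n$ is a box and $\x^*=\lambda\s^*$ is a vertex, every nonzero feasible displacement has a strictly positive directional derivative of $\cH$, and with $f_\theta$ twice differentiable this gives a strict local minimum. No second-order analysis of the attractor beyond a uniform Taylor remainder is needed.

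\textbf{Step 1 (parametrize feasible directions).} Fix $\s^*\in\loc_{\bbH_n}\!(\cE)$ and set $\x^*=\lambda\s^*$. Any $\x\in\lambda\bbQ_n$ can be written as $\x=\x^*+\mathbf{d}$ with $d_i=-s_i^*t_i$ and $t_i\in[0,2\lambda]$, since each coordinate can only move inward from $\pm\lambda$. We then have $\|\mathbf{d}\|_1=\sum_i t_i$.

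\textbf{Step 2 (gradient at the corner).} We have $\nabla\cH(\x)=-\J\x+\big(f_\theta'(x_i)\big)_i$. By the symmetry condition \eqref{eq:attractor_class_final_1}, $f_\theta'$ is odd, so $f_\theta'(\lambda s_i^*)=s_i^*f_\theta'(\lambda)$. Hence
\begin{equation*}
\nabla\cH(\x^*)^\top\mathbf{d}=\sum_{i=1}^n t_i\Big(\lambda\, s_i^*(\J\s^*)_i-f_\theta'(\lambda)\Big).
\end{equation*}
By Lemma~\ref{lemma:one_flip_local_optima}, each term $s_i^*(\J\s^*)_i$ is nonnegative. By boundary attraction \eqref{eq:attractor_class_final_3}, $c:=-f_\theta'(\lambda)>0$. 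Therefore $\nabla\cH(\x^*)^\top\mathbf{d}\ge c\,\|\mathbf{d}\|_1$.

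\textbf{Step 3 (Taylor remainder).} Since $\cE$ is quadratic and $f_\theta$ is $C^2$, hence has bounded second derivative on $[-\lambda,\lambda]$, there is a constant $C$ with
\begin{equation*}
\cH(\x^*+\mathbf{d})-\cH(\x^*)\ge c\|\mathbf{d}\|_1-C\|\mathbf{d}\|_2^2\ge \|\mathbf{d}\|_1\big(c-C\|\mathbf{d}\|_1\big).
\end{equation*}
The right-hand side is strictly positive for $0<\|\mathbf{d}\|_1<c/C$. So $\x^*$ is a (strict) local minimizer of \eqref{eq:IsingH} over $\lambda\bbQ_n$, which gives $\lambda\loc_{\bbH_n}\!(\cE)\subset\loc_{\lambda\bbQ_n}\!(\cH)$.

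The main point requiring care is Step 2. The one-flip condition only gives the weak inequality $s_i^*(\J\s^*)_i\ge 0$, which may be zero, so the strict inward slope has to come entirely from $f_\theta'(\lambda)<0$. The oddness of $f_\theta'$ is exactly what makes this attraction act with the correct sign at every corner coordinate. The remainder bound in Step 3 is routine once this uniform linear lower bound in $\|\mathbf{d}\|_1$ is in hand.
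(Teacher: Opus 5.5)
Your proof is correct and follows the paper's route. The paper checks the strict vertex condition $x_i\,\partial_i\cH(\x^*) \le \lambda f_\theta'(\lambda) < 0$ for every $i$ and then invokes its sufficient condition for box vertices, Lemma~\ref{lemma:local_opt_cube}(b); you have inlined that lemma's proof, with your explicit $\ell_1$ slope bound $c\|\mathbf{d}\|_1$ replacing the paper's compactness argument. One small remark on Step 3: the paper only assumes $f_\theta$ is twice differentiable, which does not guarantee a bounded $f_\theta''$, but you do not need $C^2$ there, since a first-order Peano remainder $o(\|\mathbf{d}\|_1)$ already suffices.
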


\begin{remark}\label{rem:S_star_subset_S_hat}
For the quadratic attractor $f_{\theta_1}$, assumption~\eqref{eq:attractor_class_final_3} is trivially satisfied, for the quartic attractor $f_{\theta_2}$, it is equivalent to $\alpha > \beta\lambda^2$ (for $k=1$), and for $f_{\theta_3}$ it holds when $\alpha\beta\lambda > \tanh^{-1}(1)$.
\end{remark}

\begin{lemma}\label{lemma:S_hat_subset_S_star}
Suppose $f_\theta$ satisfies \eqref{eq:attractor_class_final_1}, \eqref{eq:attractor_class_final_2} and \eqref{eq:attractor_class_final_4}, then every local minimizer $\x^*\in\loc_{\lambda\bbQ_n}\!(\cH)$ induces a discrete one-flip local minimizer $\sign(\x^*)\in \loc_{\bbH_n}\!(\cE)$, i.e.
\begin{equation}
    \loc_{\lambda\bbQ_n}\!(\cH)\subset \lambda \!\loc_{\bbH_n}\!(\cE).
\end{equation}
\end{lemma}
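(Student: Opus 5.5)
By Lemma~\ref{lemma:S_hat_in_corner}, which uses only \eqref{eq:attractor_class_final_2}, every $\x^*\in\loc_{\lambda\bbQ_n}(\cH)$ is a corner. So we may write $\x^*=\lambda\s^*$ with $\s^*=\sign(\x^*)\in\bbH_n$. It then suffices to show $\s^*\odot(\J\s^*)\geq\0$, since Lemma~\ref{lemma:one_flip_local_optima} identifies this with $\s^*\in\loc_{\bbH_n}(\cE)$. The argument is by contradiction: suppose some index $i$ has $s_i^*(\J\s^*)_i<0$.

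\textbf{Step 1: first-order condition at a corner.} At the corner $\x^*$, moving coordinate $i$ inward means $x_i=\lambda s_i^*-t s_i^*$ with $t\ge 0$, all other coordinates fixed. Local minimality then requires the inward one-sided directional derivative to be nonnegative:
\begin{equation*}
-s_i^*\,\partial_i\cH(\x^*)\ \ge 0 .
\end{equation*}
We have $\partial_i\cH(\x)=-(\J\x)_i+f_\theta'(x_i)$. By \eqref{eq:attractor_class_final_1}, $f_\theta'$ is odd, so $f_\theta'(\lambda s_i^*)=s_i^*f_\theta'(\lambda)$. Using $(s_i^*)^2=1$, this gives
\begin{equation*}
-s_i^*\partial_i\cH(\x^*)=\lambda s_i^*(\J\s^*)_i - f_\theta'(\lambda).
\end{equation*}

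\textbf{Step 2: use the margin.} Because $i$ is a violating index, the definition of $\gamma$ in \eqref{eq:define_gamma} gives $s_i^*(\J\s^*)_i\le-\gamma$. Therefore
\begin{equation*}
-s_i^*\partial_i\cH(\x^*)\le -\bigl(\lambda\gamma+f_\theta'(\lambda)\bigr)<0,
\end{equation*}
where the last inequality is \eqref{eq:attractor_class_final_4}. The inward derivative is strictly negative, so $\cH$ strictly decreases along the inward segment for small $t>0$. This segment stays in $\lambda\bbQ_n$, which contradicts local minimality. Hence no violating index exists, $\s^*\in\loc_{\bbH_n}(\cE)$, and $\x^*=\lambda\s^*\in\lambda\loc_{\bbH_n}(\cE)$.

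\textbf{Main obstacle.} The delicate point is the role of $\gamma$. Definition \eqref{eq:define_gamma} is a minimum over all $\s$ and all violating indices, so it supplies the uniform bound $|(\J\s^*)_i|\ge\gamma$ at any violation. I would state this explicitly, noting that $\gamma$ exists by Lemma~\ref{lem:rigidity}. If no configuration violates the condition at all, the claim is vacuous, since then every corner is a one-flip minimum.

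It is also worth checking that only coordinate $i$ moves, which is why no cross terms from $\J$ appear beyond $(\J\x^*)_i$. This uses $J_{ii}=0$: the derivative along the segment is computed at $t=0$, and the energy's dependence on $x_i$ is linear, so the inward derivative at $t=0$ is exact.
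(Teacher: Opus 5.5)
Your proof is correct and follows the paper's argument. You reduce to a corner via Lemma~\ref{lemma:S_hat_in_corner}, then combine the vertex first-order necessary condition with the margin bound $s_i(\J\s)_i\le-\gamma$ and the oddness of $f_\theta'$ to contradict \eqref{eq:attractor_class_final_4}; the paper states that condition as $x_i\,\partial_i\cH(\x)\le 0$ via Lemma~\ref{lemma:local_opt_cube}(a), which is your inward-derivative condition $-s_i^*\partial_i\cH(\x^*)\ge 0$ rescaled by $\lambda$. One small correction: your closing remark that $J_{ii}=0$ is needed there is unnecessary, since $\partial_i\cH(\x)=-(\J\x)_i+f_\theta'(x_i)$ holds for any symmetric $\J$ and only the derivative at $t=0$ enters.
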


\begin{remark}\label{rem:S_hat_subset_S_star}
For the quadratic attractor $f_{\theta_1}$, the condition~\eqref{eq:attractor_class_final_4} becomes $\theta < 0.5\gamma$, and for the quartic attractor $f_{\theta_2}$ with $k=1$, it becomes
$\alpha < \beta\lambda^2 + \gamma$.
\end{remark}

Finally, combining Lemma~\ref{lemma:S_star_subset_S_hat} and \ref{lemma:S_hat_subset_S_star}, we obtain our \textit{landscape equivalence theorem:}

\begin{theorem}\label{thm:S_hat_equals_S_star}
If $f_\theta$ is admissible (it satisfies~\eqref{eq:attractor_class_final_1}--\eqref{eq:attractor_class_final_4}), then
\begin{equation}
    \loc_{\lambda\bbQ_n}\!(\cH) = \lambda \!\loc_{\bbH_n}\!(\cE).
\end{equation}
\end{theorem}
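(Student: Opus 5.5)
The theorem follows from the two inclusions already stated. Admissibility gives \eqref{eq:attractor_class_final_1}--\eqref{eq:attractor_class_final_4}. In particular, the hypotheses of Lemma~\ref{lemma:S_star_subset_S_hat} (symmetry and boundary attraction) hold, which yields $\lambda\loc_{\bbH_n}(\cE)\subset\loc_{\lambda\bbQ_n}(\cH)$. The hypotheses of Lemma~\ref{lemma:S_hat_subset_S_star} (symmetry, concavity and robustness) also hold, which yields the reverse inclusion. Double inclusion gives equality. The real content therefore sits in the two lemmas, and I would prove them as follows.

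For Lemma~\ref{lemma:S_hat_in_corner}, take a local minimizer $\x^*$ and suppose some coordinate satisfies $|x_i^*|<\lambda$. Restrict $\cH$ to the line $\x^*+t\e_i$. Since $J_{ii}=0$, the energy term is affine in $t$, so the second derivative of the restriction equals $f_\theta''(x_i^*)<0$. A strictly concave one-dimensional function cannot have an interior local minimum, which contradicts local minimality. Hence $\x^*\in\lambda\bbH_n$.

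For Lemma~\ref{lemma:S_star_subset_S_hat}, fix $\x^*=\lambda\s^*$. Symmetry gives $f'_\theta(-\lambda)=-f'_\theta(\lambda)$, so
\begin{equation*}
\nabla\cH(\x^*)_i=-\lambda(\J\s^*)_i+f'_\theta(\lambda)s_i^* .
\end{equation*}
The feasible directions $\mathbf{d}$ at the corner satisfy $s_i^*d_i\le 0$. The first-order quantity is then
\begin{equation*}
\sum_i d_i\,\nabla\cH(\x^*)_i=\sum_i |d_i|\bigl(\lambda s_i^*(\J\s^*)_i - f'_\theta(\lambda)\bigr).
\end{equation*}
By Lemma~\ref{lemma:one_flip_local_optima} and \eqref{eq:attractor_class_final_3}, this is $\ge -f'_\theta(\lambda)\|\mathbf{d}\|_1$, with a strictly positive coefficient. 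Strict positivity of the directional derivative in every feasible direction makes the corner a strict local minimizer over the polytope. The second-order remainder is $O(\|\mathbf{d}\|^2)$ by twice differentiability and cannot overturn a linear lower bound.

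For Lemma~\ref{lemma:S_hat_subset_S_star}, by Lemma~\ref{lemma:S_hat_in_corner} any local minimizer is of the form $\lambda\s^*$. Suppose $\s^*$ is not one-flip optimal. Then some $i$ has $s_i^*(\J\s^*)_i<0$, and by \eqref{eq:define_gamma} this means $s_i^*(\J\s^*)_i\le-\gamma$. Moving inward along $\mathbf{d}=-s_i^*\e_i$ gives directional derivative
\begin{equation*}
\lambda s_i^*(\J\s^*)_i-f'_\theta(\lambda)\le-\lambda\gamma-f'_\theta(\lambda)<0
\end{equation*}
by robustness, so $\lambda\s^*$ is not a local minimizer. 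This contradiction proves the inclusion.

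The main obstacle is the careful first-order optimality argument at a nonsmooth corner of the box, with the sign bookkeeping for $f'_\theta(-\lambda)$. I would handle it via this one-coordinate directional-derivative computation, together with the margin $\gamma$ from \eqref{eq:define_gamma}, whose positivity comes from Lemma~\ref{lem:rigidity}.
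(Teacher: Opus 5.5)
Your proposal is correct and follows the paper's proof. The theorem is obtained by double inclusion from Lemma~\ref{lemma:S_star_subset_S_hat} (symmetry and boundary attraction) and Lemma~\ref{lemma:S_hat_subset_S_star} (symmetry, concavity and robustness); your lemma sketches use the same coordinatewise second-derivative and corner first-order arguments, which the paper packages as Lemma~\ref{lemma:local_opt_cube} and you inline, and one small correction: Lemma~\ref{lem:rigidity} only guarantees that the set in \eqref{eq:define_gamma} is nonempty (so $\gamma$ is well defined), whereas $\gamma>0$ holds automatically because it is a minimum of finitely many strictly positive numbers.
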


\begin{remark}\label{rem:S_hat_equals_S_star}
    It implies that the local minimizers of the discrete landscape in \eqref{eq:IsingE} is preserved under the continuous relaxation in \eqref{eq:IsingH}. The quadratic attractor $f_{\theta_1}$ is admissible iff $0 < \alpha < \gamma$, and for the quartic attractor $f_{\theta_2}$ with $k=1$, the condition becomes
    \begin{equation}\label{eq:poly_attractor_conditions}
        3\beta\lambda^2 < \alpha < \beta\lambda^2 + \gamma.
    \end{equation}
\end{remark}

Building on Lemma~\ref{lemma:one_flip_local_optima}, we define the synchronization score $\sync(\s)$ to quantify the proximity of a candidate solution $\s \in  \bbH_n$ to being a one-flip local minimum:
\begin{equation}\label{eq:sync}
    \sync(\s) := \frac{1}{n}\sum_{i=1}^n \mathbbm{1}_{\{s_i (\J \s)_i) \geq 0\}}.
\end{equation}
This metric measures the degree of satisfaction of the fixed-point condition~\eqref{eq:local_min_con_Ising}, where $\sync(\s) = 1$ certifies that $\s$ is a stable one-flip local minimum of \eqref{eq:IsingE}.

\subsection{Optimization Algorithm}
\label{sec:optimization_algorithm}

We now translate our theoretical guarantee into a practical, scalable differentiable solver. The core challenge in continuous relaxation is designing a loss landscape where smooth local minima faithfully represent discrete solutions, and our \textit{landscape equivalence result} in \Cref{thm:S_hat_equals_S_star} ensures that. So next, selecting an optimization dynamic that efficiently navigates this landscape is the main task.

\paragraph{Choice of Attractor Class.}
While our theoretical framework admits any regularizer satisfying conditions~\eqref{eq:attractor_class_final_1}--\eqref{eq:attractor_class_final_4}, empirical performance varies significantly across classes. As shown in \Cref{fig:solver_comparison}a, the quartic attractor $f_{\theta_2}$ consistently yields superior ground state (one-flip local minima) compared to $f_{\theta_1}$ and $f_{\theta_3}$. Hence, we adopt the quartic polynomial attractor for our \textbf{MiP-CRIM} (\textbf{Mi}nima-\textbf{P}reserving \textbf{C}ontinuous \textbf{R}elaxation for \textbf{I}sing \textbf{M}odel) algorithm. It leverages standard first-order gradient optimizers to navigate the non-convex landscape. Our experiments (\Cref{fig:solver_comparison}b) demonstrate that momentum-based adaptive methods, specifically ADAM~\cite{2015-kingma}, significantly outperform Momentum Gradient Descent (MGD)~\cite{wir_beck_first-order_2017} and vanilla GD in escaping saddle points and converging to high-quality solutions.

\begin{figure}[htbp]
    \centering
    \includegraphics[width=\linewidth]{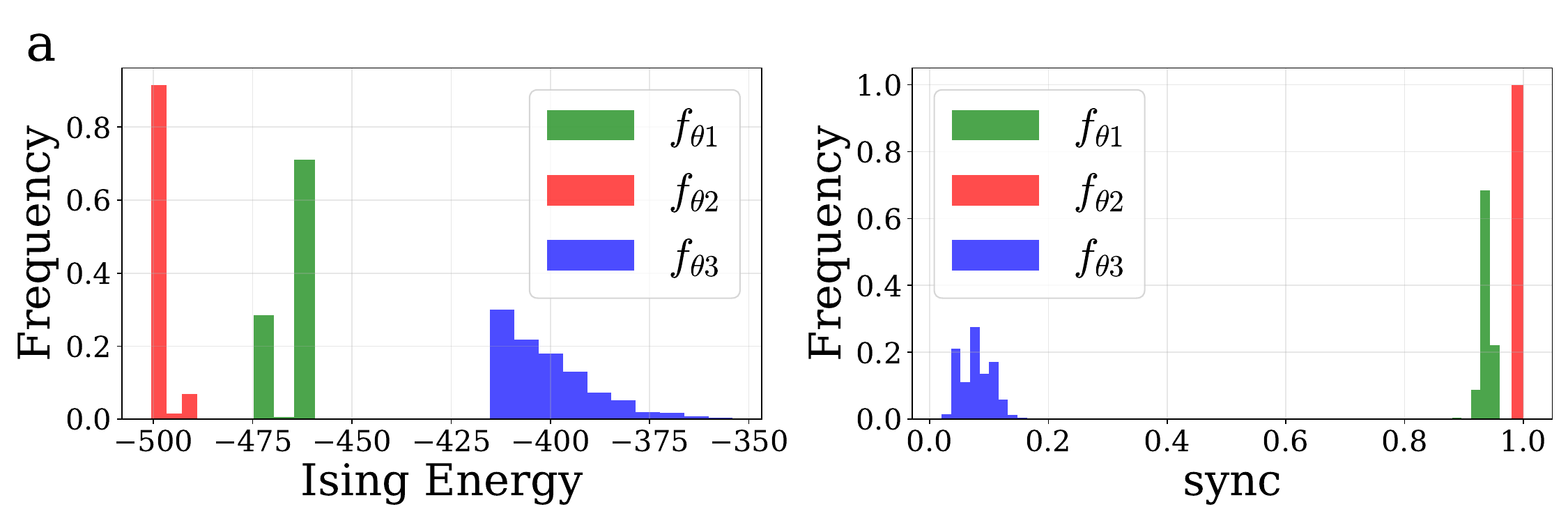}
    \includegraphics[width=\linewidth]{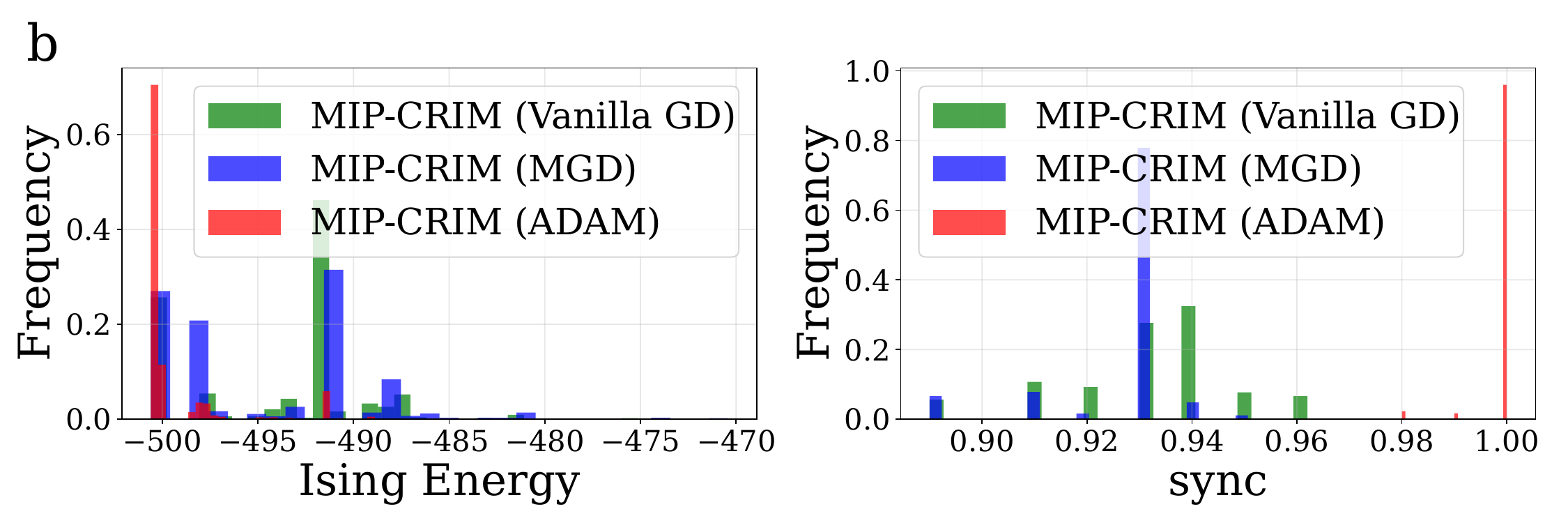}
    \caption{\emph{Comparison of attractor classes on a 100-spin Ising model with 1000 random initializations. (a)} The quartic attractor $f_{\theta_2}$ (red) significantly outperforms quadratic $f_{\theta_1}$ (green) and mean-field based $f_{\theta_3}$ (blue) attractors, yielding the lowest Ising energy while achieving synchronization score $\sync = 1$ \eqref{eq:sync} almost every time. \textbf{(b)} Evaluation of first-order optimizers (used in \Cref{algo:mip-crim}) using the quartic attractor $f_{\theta_2}$. ADAM (red) demonstrates superior convergence properties, achieving the best synchronization score ($\sync = 1$) always (along with the lowest Ising energy), each time and consistently lower energy states compared to Momentum Gradient Descent (MGD) and Vanilla GD. The results are obtained using NVIDIA L40S GPU with Intel Xeon 6520P CPU.}
    \label{fig:solver_comparison}
\end{figure}

\begin{algorithm}[htbp]
\caption{\textbf{MiP-CRIM}}
\label{algo:mip-crim}
\begin{algorithmic}[1]
\REQUIRE Coupling matrix $\J$, initialization $\x^{\mathrm{in}}$, optimization steps $T$, epochs $K$, hyperparameters $(\alpha, \beta,\lambda)$, Hamiltonian $\cH$, Optimizer: $\opt$, learning rate $\tau$, noise scale $\sigma$
\ENSURE Best found spin vector $\mathbf{s}^*$
\STATE $S_{\mathrm{opt}} \gets \emptyset$
\FOR{$k = 1$ to $K$}
    \STATE $\x^{(0)} \gets \x^{\mathrm{in}}$
    \FOR{$t = 1$ to $T$}
        \STATE $\g(\x^{(t-1)}) \gets \nabla \mathcal{H}(\x^{(t-1)})$ \hfill (gradient)
        \STATE $\x^{(t)}\gets \opt\big(\x^{(t-1)}, \g(\x^{(t-1)})\big)$ \hfill (optimizer)
        \STATE $\x^{(t)} \gets \text{Proj}_{\lambda \bbQ_n}\!(\x^{(t)})$ \hfill (box constraint)
    \ENDFOR
    \STATE $\s^{(T)} \gets \text{sign}\big(\x^{(T)}\big)$ \hfill (spin mapping)
    \STATE $\y^{(T)} \gets \lambda\,\s^{(T)} \in \lambda \bbH_n$ \hfill (corner projection)
    \STATE $\g(\y^{(T)})\gets \nabla \mathcal{H}(\y^{(T)})$
    \IF{$\y^{(T)} \odot \g(\y^{(T)}) \geq \0$} 
        \STATE $S_{\mathrm{opt}} \gets S_{\mathrm{opt}} \cup \{\s^{(T)}\}$ \hfill (one-flip local minimum)
    \ENDIF
    \STATE $\x^{\mathrm{in}} \sim \cN\!\big(\y^{(T)},\,\sigma \mathbf{I}\big)$ \hfill (perturbation for exploration)
\ENDFOR
\STATE \textbf{return} $\mathbf{s}^* \in \arg\min_{\s \in S_{\mathrm{opt}}} \ -0.5\,\s^\top \!\mathbf{J} \s$
\end{algorithmic}
\end{algorithm}

\paragraph{Implementation Details.}
The core operation in $\nabla \cH$ is a matrix-vector multiplication ($\J\x$), which is highly efficient on modern GPU accelerators. We implement an adaptive grid-search tuning scheme (detailed in \Cref{sec:appendix_param_tuning}) for the hyperparameters $\alpha, \beta, \lambda, \gamma$, and $\tau$ (learning rate). This process ensures that the theoretical admissibility conditions (Eq.~\ref{eq:poly_attractor_conditions}) are met by the attractor $f_{\theta_2}$ while dynamically refining the search space to locate the optimal parameters for a given problem instance (e.g., MAX-CUT or NPP). Step 12 in \Cref{algo:mip-crim} cheeks if $\y^{(T)}$ is a local minima of \eqref{eq:IsingH}, which ensures $\s^{(T)}$ to be the one-flip local minima of \eqref{eq:IsingE} by Theorem~\ref{thm:S_hat_equals_S_star}. As the saddle points in the interior are strictly unstable (due to $f_\theta'' < 0$ \eqref{eq:attractor_class_final_3}), the optimizer is naturally driven toward the hypercube corners. To escape the saddle points (as discussed in Remark~\ref{rem:S_hat_in_corner}) and explore the landscape, we employ a "basin-hopping" strategy~\cite{wales1997global} using Gaussian perturbations (Step 15 of \Cref{algo:mip-crim}).

\begin{table*}[!ht]
\centering
{\fontsize{7pt}{8pt}\selectfont
\begin{tabular}{l | ccc | ccc | ccc}
\toprule
\multirow{2.5}{*}{\textbf{Solver}} & \multicolumn{3}{c}{\textbf{1000 spin SK Model}} & \multicolumn{3}{c}{\textbf{K2000 Graph}} & \multicolumn{3}{c}{\textbf{NPP ($n=1000$)}} \\
\cmidrule(lr){2-4} \cmidrule(lr){5-7} \cmidrule(lr){8-10}
 & Energy ($\downarrow$) & $\sync$ ($\uparrow$) & Runtime (s) & Cut Val ($\uparrow$) & $\sync$ ($\uparrow$) & Runtime (s) & Disc ($\downarrow$) & $\sync$ ($\uparrow$) & Runtime (s) \\
\midrule
\multirow{2}{*}{GW-SDP} 
    & -15053.04 & 0.937 & \multirow{2}{*}{155.54} & 32018 & 0.945 & \multirow{2}{*}{1144.58} & 1.414 & 0.489 & \multirow{2}{*}{88.97} \\
    & -14453.04 & 0.907 &                         & 31697 & 0.917 &                          & 10.468 & 0.310 & \\
\addlinespace
\midrule
\multirow{2}{*}{bSBM~\cite{goto2021}} 
    & -16634.97 & \underline{0.997} & \multirow{2}{*}{\underline{0.12}} & 33473 & \underline{0.998} & \multirow{2}{*}{\underline{0.17}} & 0.0123 & 0.991 & \multirow{2}{*}{\underline{0.12}} \\
    & -16357.12 & 0.993 &                         & 32993.66 & 0.996 &                        & 0.8785 & 0.675 & \\
\addlinespace
\multirow{2}{*}{D-WAVE~\cite{dwave_ocean}} 
    & -16593.14 & \textbf{1.000} & \multirow{2}{*}{0.98} & 33775 & \textbf{1.000} & \multirow{2}{*}{9.64} & \underline{0.00957} & \underline{0.997} & \multirow{2}{*}{1.073} \\
    & -16471.64 & \underline{0.995} &                         & 32194.87 & 0.996 &                        & 1.191 & \underline{0.973} & \\
\addlinespace
\multirow{2}{*}{SIA~\cite{jiang2024}} 
    & 16350.10  & \textbf{1.000} & \multirow{2}{*}{0.13} & 32467 & \textbf{1.000} & \multirow{2}{*}{0.19} & 0.25 & 0.903 & \multirow{2}{*}{0.14} \\
    & -16037.15 & \underline{0.995} &                         & 32433.87 & 0.996 &                        & 16.67 & 0.502 & \\
\addlinespace
\midrule
\multirow{2}{*}{NeuroSA~\cite{neurosa25}} 
    & -14106.47 & 0.912 & \multirow{2}{*}{27.07}  & 32741 & 0.977 & \multirow{2}{*}{39.36}   & 0.176 & 0.488 & \multirow{2}{*}{28.57} \\
    & -13989.41 & 0.879 &                         & 31598.87 & 0.959 &                        & \underline{0.579} & 0.371 & \\
\addlinespace
\multirow{2}{*}{FEM~\cite{fem2025}} 
    & \textbf{-16814.75} & \textbf{1.000} & \multirow{2}{*}{\textbf{0.01}} & \underline{34038} & \textbf{1.000} & \multirow{2}{*}{\textbf{0.01}} & 313.87 & 0.132 & \multirow{2}{*}{\textbf{0.01}} \\
    & \textbf{-16593.16} & 0.994 &                         & \underline{3306.66} & \underline{0.997} &                        & 514.86 & 0.017 & \\
\addlinespace
\midrule 
\multirow{2}{*}{\textbf{MiP-CRIM (ours)}} 
    & \underline{-16689.49} & \textbf{1.000} & \multirow{2}{*}{0.21} & \textbf{34137} & \textbf{1.000} & \multirow{2}{*}{0.24} & \textbf{0.0015} & \textbf{1.000} & \multirow{2}{*}{0.21} \\
    & \underline{-16491.62} & \textbf{0.999} &                         & \textbf{33473.33} & \textbf{0.998} &                        & \textbf{0.014} & \textbf{0.994} & \\
\bottomrule
\end{tabular}
}
\vspace{0.5em}
\caption{\emph{Comparative performance on large-scale NP-hard benchmarks: 1000-spin SK Model, K2000 Graph (MAX-CUT), and 1000 uniformly generated numbers in $[0, 1]$ (NPP).} 
We report the best (top) and mean (bottom) values over $100$ runs for the primary objectives: Ising Energy~\eqref{eq:IsingE}, Cut Value~\eqref{eq:maxcut}, and Partition Discriminant~\eqref{eq:npp}, alongside the ground state synchronization score $\sync$~\eqref{eq:sync} against runtime (seconds). 
Baselines include the classical GW-SDP~\cite{goemans1995}, physics-inspired solvers (bSBM~\cite{goto2021}, SIA~\cite{jiang2024}), quantum annealing (D-WAVE \cite{dwave_ocean}), neuromorphic annealing NeuroSA~\cite{neurosa25} and the latest free energy based FEM~\cite{fem2025}). 
\textbf{Bold} values indicates the best result, and the \underline{underlined} values indicates the second best. 
ADAM optimizer is used inside MiP-CRIM solver. 
All solvers are tuned to give the best results (see Appendix~\ref{sec:appendix_param_tuning}), and the runtimes exclude the hyper-parameter tuning.
The results are obtained on an NVIDIA L40S GPU and an Intel Xeon 6520P system.}
\label{table:ising_maxcut_npp}
\end{table*}

\section{Experiments}
\label{sec:experiments}

We evaluate MiP-CRIM algorithm (in Python) on three distinct classes of NP-hard combinatorial optimization problems: the Sherrington-Kirkpatrick (SK) spin glass model~\cite{sherrington1975}, MAX-CUT, and NPP using the reduction formulations described in \Cref{sec:preliminaries}. All experiments were conducted on a workstation equipped with a 44GB NVIDIA L40S GPU and an Intel Xeon 6520P CPU. We compare our solver against a comprehensive suite of baselines, including classical exact solvers (Gurobi~\cite{gurobi}, CP-SAT~\cite{cp-sat_lp}), the GW-SDP relaxation (GW-SDP~\cite{goemans1995}), quantum annealing (D-WAVE~\cite{dwave_ocean}), and state-of-the-art physics-inspired heuristics: the Ballistic Simulated Bifurcation Machine (bSBM~\cite{goto2021}), data-dependent and GNN based PI-GNN~\cite{schuetz2022combinatorial}, Point-CNN based Spring Ising Algorithm (SIA~\cite{jiang2024}), Neuromorphic Simulated Annealing (NeuroSA~\cite{neurosa25}), and the Free Energy Machine (FEM~\cite{fem2025}).

\paragraph{Spin-Glass Model.}
We first benchmark optimization performance on the Sherrington-Kirkpatrick (SK) model with $n=10^3$ spins, a canonical dense spin-glass instance with couplings $J_{ij}=J_{ji}$ drawn from a standard normal distribution. We use $100$ independent runs for each solver to get the best and mean Ising energy values. As shown in Table~\ref{table:ising_maxcut_npp} (left panel), MiP-CRIM achieves the second-best result behind FEM while significantly outperforming classical annealing and GW-SDP. Notably, our solver maintains a perfect synchronization score ($\sync = 1.0$), matching D-WAVE and FEM, which indicates that the solver successfully converges to a consistent discrete configurations (one-flip local minima) without oscillation. In \Cref{tab:iamp_mipcrim} we benchmark with Incremental Approximate Message Passing (IMAP)~\cite{montanari2025iamp} which is SOTA for the Gaussian Orthogonal Ensemble (GOE) model, and comes with $(1-\epsilon)$-approximate solution with high probability. MiP-CRIM consistently achieves lower Ising energy than IAMP on both GOE and standard SK models, while converging to valid one-flip local minima ($\sync=1$) across all runs.

\begin{table*}[t]
\centering
\small
\begin{tabular}{llcccccc}
\toprule
 &  & \multicolumn{2}{c}{Avg Ising Energy} & \multicolumn{2}{c}{Best Ising Energy} & \multicolumn{2}{c}{Avg sync / Time (s)} \\
\cmidrule(lr){3-4} \cmidrule(lr){5-6} \cmidrule(lr){7-8}
Type & Spins 
& \multicolumn{1}{c}{IAMP} & \multicolumn{1}{c}{MiP-CRIM}
& \multicolumn{1}{c}{IAMP} & \multicolumn{1}{c}{MiP-CRIM}
& \multicolumn{1}{c}{IAMP} & \multicolumn{1}{c}{MiP-CRIM} \\
\midrule
GOE & 100  & -62.00  & \textbf{-71.39} & -70.72  & \textbf{-77.46} & 0.938 / 0.037 & \textbf{1.000 / 0.022} \\
GOE & 200  & -127.76 & \textbf{-144.74} & -138.11 & \textbf{-155.24} & 0.941 / 0.043 & \textbf{1.000 / 0.034} \\
GOE & 500  & -322.67 & \textbf{-363.59} & -338.25 & \textbf{-375.80} & 0.944 / \textbf{0.071} & \textbf{1.000} / 0.091 \\
GOE & 1000 & -643.85 & \textbf{-728.27} & -664.81 & \textbf{-750.32} & 0.943 / 0.179 & \textbf{1.000 / 0.137} \\
\midrule
SK & 100  & -414.84  & \textbf{-504.95} & -515.43  & \textbf{-545.49} & 0.915 / 0.035 & \textbf{1.000 / 0.022} \\
SK & 200  & -1122.42 & \textbf{-1438.63} & -1347.49 & \textbf{-1542.79} & 0.900 / 0.039 & \textbf{1.000 / 0.035} \\
SK & 500  & -4129.83 & \textbf{-5752.52} & -4563.74 & \textbf{-5998.39} & 0.880 / \textbf{0.060} & \textbf{1.000} / 0.088 \\
SK & 1000 & -10895.32 & \textbf{-16269.86} & -11660.55 & \textbf{-16767.25} & 0.870 / 0.144 & \textbf{1.000 / 0.138} \\
\bottomrule
\end{tabular}
\vspace{0.3em}
\caption{Comparison of IAMP and MiP-CRIM on GOE and SK models. GOE: $J_{ij}=J_{ji}\sim \mathcal{N}(0,1/n)$, $J_{ii}\sim \mathcal{N}(0,2/n)$; SK: $J_{ij}=J_{ji}\sim \mathcal{N}(0,1)$, $J_{ii}=0$. $J_{ij}$ are quantized to 5 decimals ($\gamma_0=10^{-5}$). MiP-CRIM uses fixed parameters tuned only on a 100-spin instance satisfying admissibility ($3\beta\lambda^2<\alpha<\beta\lambda^2+\gamma_0$). IAMP uses best-tuned parameters per model. Best and mean values are obtained over 100 instances. Results show MiP-CRIM consistently achieves lower energies and higher synchronization with comparable or better runtime.}
\label{tab:iamp_mipcrim}
\end{table*}

\begin{figure}[htbp]
    \centering
    \includegraphics[width=0.995\linewidth]{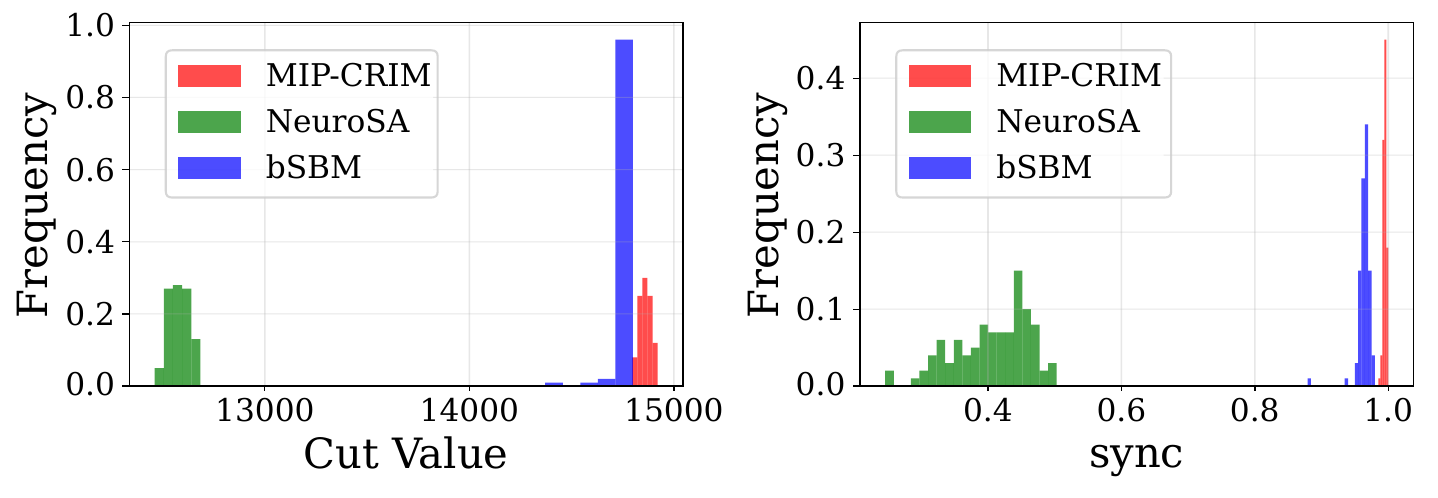}
    \includegraphics[width=\linewidth]{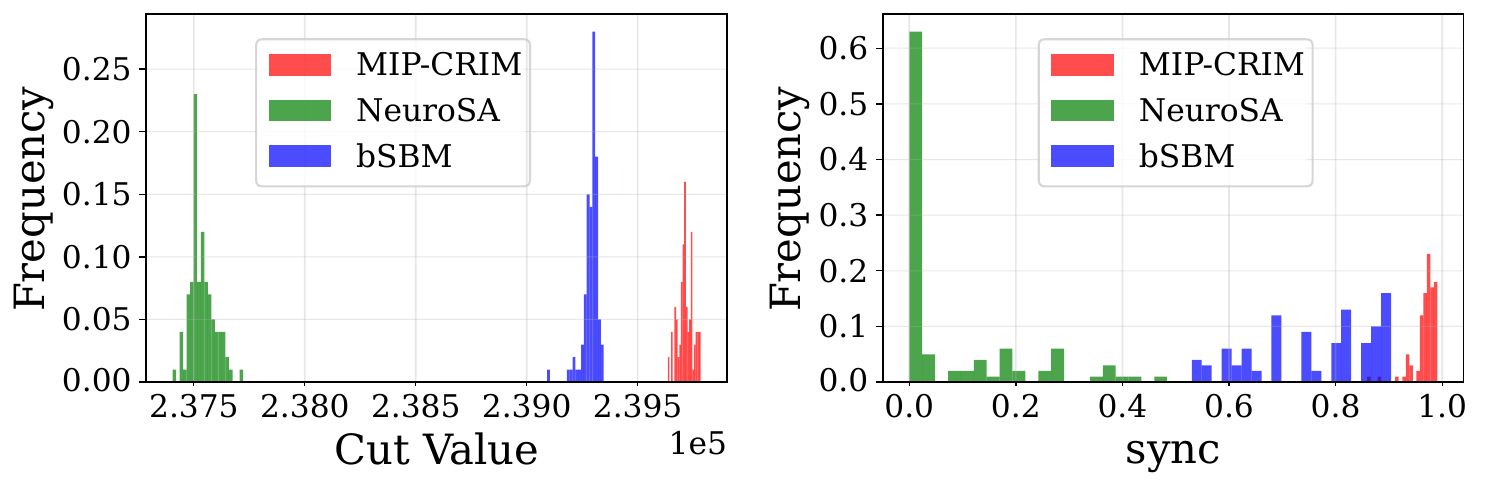}
    \caption{\emph{MAX-CUT comparison on Erd\H{o}s--R\'enyi (ER) graphs.}
    Top row: sparse ER graph with $n=1000$ and edge probability $p=0.05$; bottom row: dense ER graph with $n=1000$ and $p=0.95$. The left column shows the distribution of achieved cut values, while the right column shows the distribution of the synchronization score: $\sync$ \eqref{eq:sync}. Each solver (MiP-CRIM with ADAM optimizer, NeuroSA, and bSBM) is run 100 times with independent random initializations to generate the histograms. We have used NVIDIA L40S GPU with Intel Xeon 6520P CPU system.}
    \label{fig:maxcut_ER_graphs}
\end{figure}

\paragraph{MAX-CUT.}
We assess MAX-CUT performance across three graph topologies: dense complete graphs $K_n$ (with $n$ nodes and edge weights independently sampled from ${\pm1}$ with equal probability), Erd\H{o}s--R\'enyi graphs~\cite{erdds1959random} $\mathrm{ER}(n,p)$ (with $n$ nodes and each edge is included independently with probability $p$.), and the standard G-set\footnote{\url{https://web.stanford.edu/~yyye/yyye/Gset/}} benchmark.
On the $K_{2000}$ instance (Table~\ref{table:ising_maxcut_npp}, middle panel), MiP-CRIM identifies the best cut value among all solvers, surpassing both D-WAVE and FEM within a highly competitive runtime.

For the standard G-set benchmarks (Table~\ref{tab:maxcut-comparison}), MiP-CRIM consistently finds solutions within $1\%$ relative error of the best-known results. It also beats the graph-neural-net based PI-GNN and matching the optimal known cut for the large $3000$-node graph G49 and outperforms D-WAVE on the instances: G14, G50, and G55. Results on other G-set graphs and the parameter tuning process (along with optimal parameter values) are detailed in Appendix~\ref{sec:appendix_maxcut_gsets}.

\begin{table*}[htbp]
\begin{center}
{\fontsize{8pt}{9pt}\selectfont
\begin{tabular}{lccccccc}
\toprule
G-set & Nodes & \multicolumn{1}{c}{Best-Known} & \multicolumn{1}{c}{D-WAVE} & \multicolumn{1}{c}{FEM} & \multicolumn{1}{c}{PI-GNN} & \multicolumn{1}{c}{MiP-CRIM} & \multicolumn{1}{r}{Rel. Err. (\%)} \\
\midrule
G14 & 800   & \textbf{3064}  & \underline{3056}  & 3024  & 3026  & \underline{3056}  & 0.26 \\
G15 & 800   & \textbf{3050}  & \underline{3046}  & 3003  & 2990  & 3037  & 0.43 \\
G22 & 2000  & \textbf{13359} & \underline{13357} & 13155 & 13181 & 13326 & 0.25 \\
G49 & 3000  & \textbf{6000}  & \textbf{6000}  & 5564  & \underline{5918}  & \textbf{6000}  & 0.00 \\
G50 & 3000  & \textbf{5880}  & 5852  & 5590  & 5820  & \underline{5856}  & 0.41 \\
G55 & 5000  & \textbf{10299} & \underline{10259} & 9855  & 10138 & 10173 & 1.22 \\
G70 & 10000 & \textbf{9591}  & 9514  & 8938  & 9421  & \underline{9531}  & 0.21 \\
\bottomrule
\end{tabular}
}
\end{center}
\vspace{0.3em}
\caption{\emph{Comparison of MaxCut values on G-set graphs.} The best-known cut values are taken from~\cite{maxcut2017multiple} and PI-GNN results are from~\cite{schuetz2022combinatorial}. Best results are highlighted in \textbf{bold}, while second-best results are \underline{underlined}. FEM, and MiP-CRIM (with ADAM optimizer) are tuned for optimal performance. The relative error values at the right-most column are calculated for the cuts ($c_1$) obtained by MiP-CRIM w.r.t. the best-known cut values ($c_2$) as $100\times (c_2-c_1)/c_2$. The experiments are conducted on an NVIDIA L40S GPU and an Intel Xeon 6520P system. }
\label{tab:maxcut-comparison}
\end{table*}

\paragraph{Number Partitioning Problem (NPP).}
Finally, we evaluate the solver on the NPP with $n=1000$ numbers drawn from uniform $[0, 1]$ distribution. This problem is characterized by extreme sensitivity to precision. Table~\ref{table:ising_maxcut_npp} (right panel) reports the partition discriminant \eqref{eq:npp}. MiP-CRIM achieves the lowest discriminant among all heuristics, significantly outperforming the best alternative D-WAVE and FEM . This result underscores the efficacy of our continuous relaxation scheme in handling problems with stiff, high-precision constraints.

\section{Discussion}
\label{sec:discussion}

\paragraph{Scalability and Robustness.}
The computational scalability of MiP-CRIM is highlighted in Figure~\ref{fig:maxcut_K_runtime}. While exact solvers like Gurobi and CP-SAT fail to scale beyond $n=500$ nodes (exceeding reasonable time limits), MiP-CRIM exhibits flat, near-constant runtime scaling, and solving large-scale instances orders of magnitude faster than GW-SDP and D-WAVE.
Furthermore, Figure~\ref{fig:maxcut_ER_graphs} demonstrates the solver's robustness on $n=1000$ node ER graphs. On both sparse ($p=0.05$) and dense ($p=0.95$) topologies, where MiP-CRIM produces a tightly concentrated distribution of high cut values. The histograms of the synchronization score $\sync$ (Figure~\ref{fig:maxcut_ER_graphs}, right column) confirm that MiP-CRIM consistently achieves near-perfect one-flip local minima ($\sync \approx 1.0$), contrasting sharply with the higher variance observed in NeuroSA. 

MiP-CRIM requires only lightweight parameter tuning via an adaptive grid search with $3$ to $5$ values per parameter  (Appendix~\ref{sec:appendix_param_tuning}). Importantly, tuning is performed once per problem class (e.g., SK, GOE, K2000), and the resulting configuration remains robust across different instances within that class, as evidenced by \Cref{fig:maxcut_ER_graphs,table:ising_maxcut_npp}. In practice, parameters can be calibrated on small-scale instances and reliably transferred to larger problems, enabling scalability and real-time tuning with minimal overhead (as shown in \Cref{tab:iamp_mipcrim}).

\begin{figure}[htbp]
    \centering
    \includegraphics[width=\linewidth]{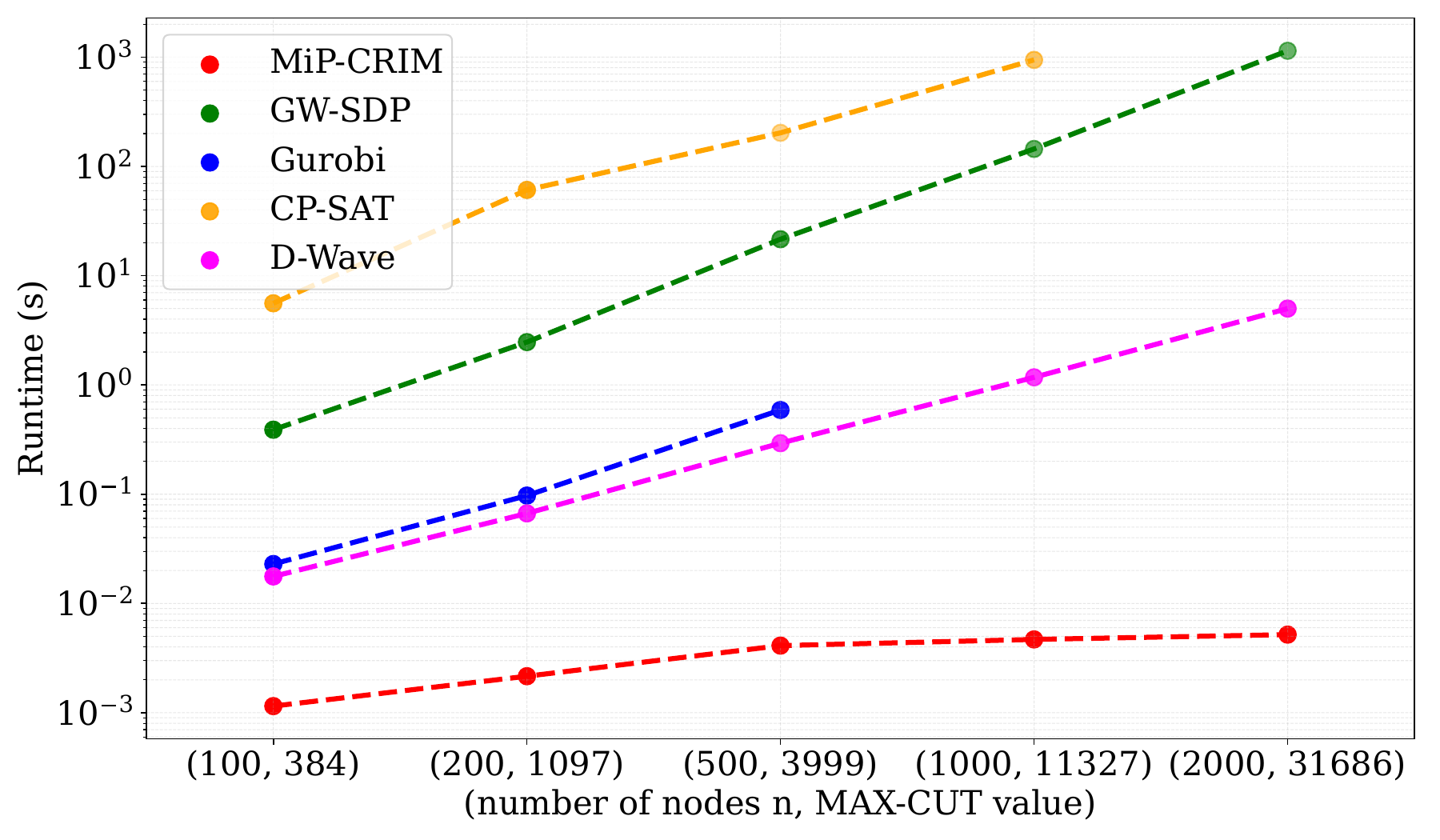}
    \caption{\emph{Runtime comparison for MAX-CUT on complete graphs.} Complete graphs with $n$ nodes and randomly sampled $\pm1$ edge weights are considered. The x-axis reports $(n,\text{MAX-CUT})$, where the MAX-CUT value is obtained by $1000$ random roundings of the GW-SDP. The y-axis shows the runtime required by each solver to match this reference value. ADAM is used in the MiP-CRIM algorithm. For $n>500$, Gurobi and CP-SAT fail to scale and are therefore omitted. We have used NVIDIA L40S GPU with Intel Xeon 6520P CPU system.}
    \label{fig:maxcut_K_runtime}
\end{figure}

The experimental results indicate that the proposed relaxation produces reliable solutions and captures key aspects of the underlying combinatorial structure. Across a range of problem settings, the method consistently yields stable and meaningful solutions, suggesting that the relaxation aligns well with the original discrete problems in practice.

\paragraph{Stability and Consistency.}
Across all benchmarks, we consistently observe near-perfect synchronization scores ($\sync \approx 1.0$ in Table~\ref{table:ising_maxcut_npp}, \ref{tab:iamp_mipcrim} and Figure~\ref{fig:maxcut_ER_graphs}), indicating strong alignment between continuous solutions and discrete spin configurations. In contrast to many heuristic relaxations, where continuous minimizers often drift away from discrete solutions and require additional rounding or post-processing, the polynomial attractor $f_{\theta_2}$ used in MiP-CRIM naturally shapes the optimization landscape so that stable points lie directly on discrete configurations. This behaviour aligns with the theoretical guarantee in Theorem~\ref{thm:S_hat_equals_S_star}.

The results on the Number Partitioning Problem (NPP) are particularly informative. NPP is highly sensitive to numerical precision, as small continuous errors can lead to large discriminants in the final partition. MiP-CRIM performs well even under these stringent conditions and, in several cases, surpasses specialized solvers. This suggests that the polynomial attractors provide a stable mechanism for handling fine-grained constraints, which can be challenging for data-driven methods like PI-GNN. At the same time, the runtime behaviour on MAX-CUT (Figure~\ref{fig:maxcut_K_runtime}) indicates that this improved precision does not compromise scalability: the solver retains the favourable scaling properties typical of first-order gradient-based methods.

\paragraph{Future Directions.}
There are many natural directions for future work. The gradient-based nature of MiP-CRIM makes it a good fit for emerging analog hardware, such as optical Coherent Ising Machines~\cite{wang2013coherent}. Extending the framework to handle hard constraints directly could broaden its use in applications like scheduling and routing. Another promising direction is to learn or adapt the class of admissible attractors automatically while retaining the theoretical guarantees.

\section{Conclusion}
\label{sec:conclusion}

MiP-CRIM is a continuous optimization framework for general Ising problems that preserves one-flip local optimality under relaxation. In practice, MiP-CRIM proves to be both versatile and robust, while highly specialized solvers may offer small advantages on particular instances, our method performs reliably across a wide range of problems, from sparse G-set graphs to the numerically challenging Number Partitioning task where many baselines struggle. MiP-CRIM also scales efficiently on modern GPUs, enabling the solution of large instances that are beyond the reach of exact solvers such as Gurobi or CP-SAT. By combining scalable gradient-based optimization with local-optimality guarantees for Ising formulations, MiP-CRIM provides a unified and practical approach to solving diverse NP-hard problems.


\section*{Impact Statement}

We introduce MiP-CRIM, a continuous relaxation for the Ising problems, that comes with a theoretical guarantee. This has significant potential to accelerate large-scale problem solving in high-impact domains such as logistics, circuit design, and operations research, where NP-hard problems like MAX-CUT and Number Partitioning are ubiquitous. MiP-CRIM offers an energy-efficient alternative to specialized platforms like hardware-based Ising machines and quantum annealers.


\bibliography{references}
\bibliographystyle{icml2026}

\newpage
\appendix
\onecolumn

\makeatletter
\let\theorem\relax
\let\c@theorem\relax
\let\thetheorem\relax

\let\proposition\relax
\let\lemma\relax
\let\corollary\relax
\let\definition\relax
\let\assumption\relax
\let\remark\relax
\makeatother

\theoremstyle{plain}
\newtheorem{theorem}{Theorem}[section]
\newtheorem{proposition}[theorem]{Proposition}
\newtheorem{lemma}[theorem]{Lemma}
\newtheorem{corollary}[theorem]{Corollary}

\theoremstyle{definition}
\newtheorem{definition}[theorem]{Definition}
\newtheorem{assumption}[theorem]{Assumption}

\theoremstyle{remark}
\newtheorem{remark}[theorem]{Remark}

\rule{\textwidth}{0.5pt}
\begin{center}
\textbf{\Large Appendix}
\rule{\textwidth}{0.5pt} 
\end{center}

In this appendix, we first present the reduction to the homogeneous Ising model (Appendix~\ref{sec:Reduction_to_the_Homogeneous_Model}) and provide detailed proofs of our theoretical results (Appendix~\ref{sec:Mathematical_Proofs}). We then review the MAX-CUT formulation and the Goemans-Williamson SDP relaxation (Appendix~\ref{sec:MAX-CUT_and_GW-SDP}), and discuss how our attractor functions relate to nonlinear parametric oscillators and mean-field annealing (Appendix~\ref{sec:Physics_Inspired_Class_of_Attractor_Functions}). Appendix~\ref{sec:appendix_param_tuning} describes our adaptive hyperparameter tuning strategy, and we conclude with additional G-set experiments and ablation studies in Appendix~\ref{sec:aditional_experiments}.

\section{Reduction to the Homogeneous Ising Model}
\label{sec:Reduction_to_the_Homogeneous_Model}

In this section, we demonstrate the standard technique used to transform a general $n$-spin Ising model with linear terms (external field) into an equivalent $(n+1)$-spin homogeneous Ising model (quadratic terms only). This reduction allows us to treat all problem instances using the unified homogeneous formulation $\min -\frac{1}{2}\s^\top\! \J \s$ used throughout the main text.

Consider the general Ising energy function with an external field vector $\h \in \Re^n$:
\begin{equation}
\label{eq:h_field_ising}
    \cE(\s) = -\frac{1}{2}\s^\top\! \J \s - \h^\top\! \s, \qquad \s \in  \bbH_n.
\end{equation}
To absorb the linear term $-\h^\top\! \s$, we introduce an auxiliary spin $s_{n+1}$ (denoted here as $t$) and define the augmented state vector $\boldsymbol{\sigma} = [\s^\top, t]^\top \in \{-1,1\}^{n+1}$. We construct the augmented coupling matrix $\hat{\J} \in \Re^{(n+1)\times(n+1)}$ as:
\begin{equation}
\label{eq:zero_field_ising_matrix}
    \hat{\J} =
    \begin{pmatrix}
        \J & \h \\
        \h^\top & 0
    \end{pmatrix}.
\end{equation}
The corresponding homogeneous energy function is given by:
\begin{equation}
\label{eq:zero_field_ising_energy}
    \hat{\cE}(\boldsymbol{\sigma}) = -\frac{1}{2} \boldsymbol{\sigma}^\top\! \hat{\J} \boldsymbol{\sigma}, \qquad \boldsymbol{\sigma} \in \{-1,1\}^{n+1}.
\end{equation}

\begin{proposition}
\label{prop:h_field_zero_field}
Let $\boldsymbol{\sigma}^* = (\s_0, t_0)$ be a ground state of the homogeneous model \eqref{eq:zero_field_ising_energy}, where $\s_0 \in \{-1, 1\}^n$ is the sub-vector of the first $n$ spins and $t_0 \in \{-1,1\}$ is the auxiliary spin. Then, the vector $\s^* = t_0 \s_0$ is a ground state of the original model with external field \eqref{eq:h_field_ising}.
\end{proposition}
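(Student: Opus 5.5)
The plan is to expand the homogeneous energy in block form and exploit the global sign-flip symmetry of $\hat{\cE}$. For $\boldsymbol{\sigma} = (\s, t)$ with $\s\in\bbH_n$ and $t\in\{-1,1\}$, the block structure of $\hat{\J}$ in \eqref{eq:zero_field_ising_matrix} gives
\begin{equation*}
\hat{\cE}(\s,t) = -\frac{1}{2}\s^\top\!\J\s - t\,\h^\top\!\s - \frac{1}{2}\cdot 0\cdot t^2 = -\frac{1}{2}\s^\top\!\J\s - t\,\h^\top\!\s .
\end{equation*}
The cross term appears twice, as $\h t$ and $\h^\top$, which cancels the factor $1/2$. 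In particular, $\hat{\cE}(\s,1) = \cE(\s)$. Since $\hat{\cE}$ is a quadratic form, $\hat{\cE}(-\boldsymbol{\sigma}) = \hat{\cE}(\boldsymbol{\sigma})$, and hence $\hat{\cE}(\s,t) = \hat{\cE}(t\s, 1) = \cE(t\s)$ for every $t\in\{-1,1\}$.

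The key steps, in order, are as follows.
\begin{enumerate}
\item Use the symmetry to compute $\cE(t_0\s_0) = \hat{\cE}(t_0\s_0, 1) = \hat{\cE}(\s_0, t_0) = \hat{\cE}(\boldsymbol{\sigma}^*)$.
\item Take any competitor $\s\in\bbH_n$. Then $(\s,1)$ is a feasible point of the homogeneous model, so
\begin{equation*}
\cE(\s) = \hat{\cE}(\s,1) \geq \hat{\cE}(\boldsymbol{\sigma}^*) = \cE(t_0\s_0),
\end{equation*}
where the inequality holds because $\boldsymbol{\sigma}^*$ is a ground state.
\item Conclude that $\s^* = t_0\s_0 \in \bbH_n$ is a ground state of \eqref{eq:h_field_ising}.
\end{enumerate}

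I do not expect a genuine obstacle. The only points needing care are the bookkeeping of the cross term $-\tfrac12(2t\,\h^\top\s)$ and the observation that the $(n+1,n+1)$ entry of $\hat{\J}$ is zero, so no constant $t^2$ term shifts the energies. Even a nonzero diagonal entry would only add a constant, since $t^2=1$, and would not affect the argmin.

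Optionally, I would also note the converse: if $\s^*$ is a ground state of \eqref{eq:h_field_ising}, then $(\s^*,1)$ and $(-\s^*,-1)$ are ground states of \eqref{eq:zero_field_ising_energy}. This shows the two minimum values coincide.
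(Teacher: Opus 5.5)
Your proposal is correct and follows the paper's proof: expand the block form to get $\hat{\cE}(\s,t) = -\tfrac{1}{2}\s^\top\!\J\s - t\,\h^\top\!\s = \cE(t\s)$, then compare $\hat{\cE}(\boldsymbol{\sigma}^*)$ against the feasible competitors $(\s,1)$. The only cosmetic difference is that you obtain $\hat{\cE}(\s,t)=\cE(t\s)$ from the global sign-flip symmetry of the quadratic form, whereas the paper expands $\cE(t\s)$ directly and uses $t^2=1$.
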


\begin{proof}
First, we expand the homogeneous energy $\hat{\cE}$ for an arbitrary augmented state $(\s, t)$:
\begin{align}
    \hat{\cE}((\s, t)) &= -\frac{1}{2} \begin{pmatrix} \s^\top & t \end{pmatrix} \begin{pmatrix} \J & \h \\ \h^\top & 0 \end{pmatrix} \begin{pmatrix} \s \\ t \end{pmatrix} \nonumber \\
    &= -\frac{1}{2} \left( \s^\top\! \J \s + t \s^\top\! \h + t \h^\top\! \s \right) \nonumber \\
    &= -\frac{1}{2} \s^\top\! \J \s - t \h^\top\! \s. \label{eq:expanded_hat_E}
\end{align}
Next, consider the energy of the candidate solution $\tilde{\s} = t\s$ in the original model \eqref{eq:h_field_ising}:
\begin{align}
    \cE(t\s) &= -\frac{1}{2} (t\s)^\top\! \J (t\s) - \h^\top\! (t\s) \nonumber \\
    &= -\frac{1}{2} t^2 \s^\top\! \J \s - t \h^\top\! \s.
\end{align}
Since $t \in \{-1, 1\}$, we have $t^2 = 1$. Thus, we establish the exact equivalence:
\begin{equation}
\label{eq:equivalence_relation}
    \cE(t\s) = \hat{\cE}((\s, t)).
\end{equation}
By definition, if $\boldsymbol{\sigma}^* = (\s_0, t_0)$ is the ground state of $\hat{\cE}$, then $\hat{\cE}((\s_0, t_0)) \leq \hat{\cE}((\s, 1))$ for any $\s \in  \bbH_n$. Using the equivalence relation \eqref{eq:equivalence_relation}:
\begin{equation}
    \cE(t_0 \s_0) = \hat{\cE}((\s_0, t_0)) \leq \hat{\cE}((\s, 1)) = \cE(1 \cdot \s) = \cE(\s).
\end{equation}
This inequality holds for all $\s \in  \bbH_n$, implying that $\s^* = t_0 \s_0$ minimizes the original energy $\cE(\s)$.
\end{proof}

\section{Mathematical Proofs}\label{sec:Mathematical_Proofs}

\begin{lemma}\label{lem:rigidity}
Let $\J \in \mathbb{R}^{n \times n}$ be a coupling matrix ($\J^\top = \J$, $J_{ii} = 0$ for all $i$). Suppose that
\begin{equation}\label{eq:hypothesis}
\s \odot (\J \s) \;\geq\; \0 \qquad \text{for every } \s \in \bbH_n.
\end{equation}
Then $\J = \0$.
\end{lemma}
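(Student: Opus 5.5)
The plan is to prove the contrapositive through an averaging argument: if every corner of $\bbH_n$ is a one-flip local minimum, then every corner minimizes the energy $\cE$ globally, so $\cE$ is constant on $\bbH_n$, and a constant quadratic form with zero diagonal on the Hamming cube must have $\J=\0$.

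\emph{Step 1: $\cE$ is constant on $\bbH_n$.} Fix $\s\in\bbH_n$ and an index $i$. By the computation in the proof of Lemma~\ref{lemma:one_flip_local_optima}, flipping spin $i$ gives
\[
\cE(\s^{(i)})-\cE(\s)=2s_i(\J\s)_i .
\]
The hypothesis \eqref{eq:hypothesis} therefore says $\cE(\s^{(i)})\ge\cE(\s)$. Now apply the hypothesis to $\s^{(i)}$ as well, flipping spin $i$ back. This gives $\cE(\s)\ge\cE(\s^{(i)})$, so $\cE(\s^{(i)})=\cE(\s)$ for every $\s$ and $i$. Any two corners of $\bbH_n$ are joined by a path of single flips, so $\cE$ is constant on $\bbH_n$. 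As a by-product, $s_i(\J\s)_i=0$ for all $\s$ and $i$, which means $(\J\s)_i=0$ for every $\s\in\bbH_n$.

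\emph{Step 2: extract the entries of $\J$.} Since $\J\s=\0$ for every $\s\in\bbH_n$, and the vectors of $\bbH_n$ span $\Re^n$, we get $\J=\0$. Concretely, $\e_j=\tfrac12(\1-\s)$ where $\s=\1-2\e_j\in\bbH_n$, and $\1\in\bbH_n$, so linearity gives $\J\e_j=\0$ for each $j$. A more elementary alternative is to compare $\s$ with its flip at index $i$: $(\J\s)_i=\sum_{k\ne i}J_{ik}s_k$, and choosing $\s$ and then flipping $s_j$ changes this quantity by $2J_{ij}$, while both values must be zero. Either route forces $J_{ij}=0$.

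\emph{Main obstacle.} The only delicate point is Step 1: the hypothesis gives just an inequality, and one must notice that applying it at both $\s$ and its flipped neighbour turns it into an equality. Symmetry of $\J$ and $J_{ii}=0$ enter only through the energy-difference formula $2s_i(\J\s)_i$, which relies on $J_{ii}=0$ and $\J^\top=\J$. These should be checked carefully when that formula is reused, but they present no real difficulty. Everything after Step 1 is routine linear algebra.
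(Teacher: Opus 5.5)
Your proof is correct and is essentially the paper's argument. The paper pairs $\varepsilon$ with $-\varepsilon$, which is the same constraint as your pairing of $\s$ with $\s^{(i)}$ up to the global sign symmetry; either pairing turns the inequality into $(\J\s)_i=0$, after which differencing (or your spanning argument) gives $J_{ij}=0$. The detour through the energy is harmless but unnecessary, since directly $s^{(i)}_i(\J\s^{(i)})_i=-s_i(\J\s)_i$ uses only $J_{ii}=0$, so symmetry of $\J$ is not actually needed.
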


\begin{proof}
Fix $i \in \{1, \dots, n\}$. The $i$-th component of $\s \odot (\J \s)$ is
\[
[s \odot (Js)]_i
\;=\; s_i \sum_{j=1}^{n} J_{ij} s_j
\;=\; \sum_{j \neq i} J_{ij}\, s_i s_j,
\]
where the last equality uses $J_{ii} = 0$. Replacing $s$ by $-s$ leaves each
product $s_i s_j$ unchanged, so the constraint~\eqref{eq:hypothesis} at $s$ and
at $-s$ coincide. We can also restrict to those $s$ with $s_i = 1$
without loss of generality, and \eqref{eq:hypothesis} applied to the $i$-th
coordinate becomes
\begin{equation}\label{eq:reduced}
\sum_{j \neq i} J_{ij}\, \varepsilon_j \;\geq\; 0
\qquad \text{for every } \boldsymbol{\varepsilon} \in \{-1, 1\}^{\,\{1,\dots,n\}\setminus\{i\}}.
\end{equation}
Let $\bv \in \mathbb{R}^{n-1}$ denote the vector with entries $J_{ij}$ for
$j \neq i$, so that \eqref{eq:reduced} reads $\langle \bv, \boldsymbol{\varepsilon} \rangle \geq 0$
for all sign vectors $\boldsymbol{\varepsilon}$. Applying the same inequality to $-\boldsymbol{\varepsilon}$
yields $\langle \bv, \boldsymbol{\varepsilon} \rangle \leq 0$, hence
\begin{equation}\label{eq:zero-inner-product}
\langle \bv, \boldsymbol{\varepsilon} \rangle \;=\; 0
\qquad \text{for every } \boldsymbol{\varepsilon} \in \{-1, 1\}^{n-1}.
\end{equation}
Now fix any $k \neq i$ and choose $\boldsymbol{\varepsilon}, \boldsymbol{\varepsilon}' \in \{-1, 1\}^{n-1}$
that agree in every coordinate except the $k$-th, with $\varepsilon_k = 1$ and
$\varepsilon'_k = -1$. Subtracting the two instances
of~\eqref{eq:zero-inner-product} gives
\[
0 \;=\; \langle \bv, \boldsymbol{\varepsilon} \rangle - \langle \bv, \boldsymbol{\varepsilon}' \rangle
\;=\; 2\, J_{ik},
\]
so $J_{ik} = 0$. Since $k \neq i$ was arbitrary and $J_{ii} = 0$ by assumption,
the $i$-th row of $J$ vanishes. As $i$ was arbitrary, $J = 0$.
\end{proof}

\begin{remark}
The result directly implies that the set of one-flip local minima $\loc_{\bbH_n}\!(\cE)$ is strictly smeller than the entire set $\bbH_n$ for all nonzero Ising model ($\J\neq\0$). Negation of the statement means: for all $\J \neq \0$, there exists $\s\in \bbH_n$ and index $i$ such that $s_i (\J\s)_i < 0$, implying the existence of $\gamma$ in \eqref{eq:define_gamma}.
\end{remark}

\vspace{1em}



\subsection{Proof of Proposition~\ref{prop:general_low_bound_gamma}}
\textbf{Re-statement:}
Let matrix $\J$ be stored with finite precision such that $\mathrm{lsb}(\J) = 10^{-b}$, where $b \in \Zr$ (lowest significant bit in base $10$). Then,
\begin{equation}
    \gamma(\J) \;\geq\; \mathrm{lsb}(\J).
\end{equation}
\begin{proof}
First, consider the case where $\J$ consists entirely of integers, i.e., $\J \in \Zr^{n\times n}$. For any spin configuration $\s \in  \bbH_n$, the local field term $(\J\s)_i = \sum_{j} J_{ij} s_j$ is a sum of integers, and thus must be an integer. If a configuration is violating (i.e., $s_i(\J\s)_i < 0$), then $(\J\s)_i \neq 0$. Since $(\J\s)_i \in \Zr \setminus \{0\}$, we have $|(\J\s)_i| \geq 1$. Consequently, for integer matrices, $\gamma(\J) \geq 1$.

Now, consider the general case where $\J$ is stored with precision $10^{-b}$. We can express $\J$ as a scaled integer matrix:
\[
    \J = 10^{-b} \cdot \mathbf{K}, \quad \text{where } \mathbf{K} \in \Zr^{n\times n}.
\]
From the definition of $\gamma$ in \eqref{eq:define_gamma}, observe the homogeneity property for any scalar $c > 0$:
\begin{align*}
    \gamma(c\J) &= \min_{\s} \big\{ |c(\J\s)_i| : s_i c (\J\s)_i < 0 \big\} \\
    &= c \cdot \min_{\s} \big\{ |(\J\s)_i| : s_i (\J\s)_i < 0 \big\} \\
    &= c \, \gamma(\J).
\end{align*}
Applying this to our decomposition:
\[
    \gamma(\J) = \gamma(10^{-b} \mathbf{K}) = 10^{-b} \, \gamma(\mathbf{K}).
\]
Since $\mathbf{K}$ is an integer matrix, $\gamma(\mathbf{K}) \geq 1$. Therefore:
\[
    \gamma(\J) \geq 10^{-b} \cdot 1 = \mathrm{lsb}(\J).
\]
\end{proof}

\begin{lemma}\label{lemma:local_opt_cube}
    Let $f\colon \Re^n\to \Re$ be differentiable and let $x \in \lambda \bbH_n$ be a vertex of the cube $C=\lambda \bbQ_n$.  Then:
    \begin{enumerate}
        \item[\textup{(a)}] \textbf{(Necessary)} If $x$ is a local minimum of $f$ over $C$, then
        \begin{equation}\label{eq:necessary_cond}
            x_i \,\frac{\partial f}{\partial x_i}(x) \leq 0 \qquad \forall\, i=1,\dots,n.
        \end{equation}
        \item[\textup{(b)}] \textbf{(Sufficient)} If the strict inequalities
        \begin{equation}\label{eq:sufficient_cond}
            x_i \,\frac{\partial f}{\partial x_i}(x) < 0 \qquad \forall\, i=1,\dots,n
        \end{equation}
        hold, then $x$ is a strict local minimum of $f$ over $C$.
    \end{enumerate}
\end{lemma}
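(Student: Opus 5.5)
For part (a) I will test $f$ along the feasible edge directions at the vertex. Fix a vertex $x\in\lambda\bbH_n$ and an index $i$. Since $x_i=\pm\lambda$, the direction $d=-\sign(x_i)\e_i$ points into the cube, and $x+td\in C$ for all $t\in[0,2\lambda]$. If $x$ is a local minimum over $C$, then $\phi(t):=f(x+td)\ge f(x)=\phi(0)$ for all small $t>0$. Hence the one-sided derivative satisfies $\phi'(0^+)=\nabla f(x)^\top d=-\sign(x_i)\,\partial_i f(x)\ge 0$. Multiplying by $\lambda>0$ and using $x_i=\lambda\sign(x_i)$ gives $x_i\,\partial_i f(x)\le 0$, which is \eqref{eq:necessary_cond}.

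For part (b) I will use a first-order Taylor expansion together with the geometry of the cone of feasible directions at a vertex. Every $y\in C$ can be written as $y=x+h$, where each coordinate satisfies $\sign(x_i)h_i\le 0$. The reason is that $|x_i+h_i|\le\lambda=|x_i|$, so $h_i$ points inward or is zero. Write $g_i:=\partial_i f(x)$. Then
\[
\nabla f(x)^\top h=\sum_i g_i h_i=\sum_i \bigl(x_i g_i\bigr)\frac{h_i}{x_i}=\sum_i \frac{|x_i g_i|\,|h_i|}{\lambda}.
\]
The last equality holds because $x_ig_i<0$ and $h_i/x_i\le 0$, so each product $(x_ig_i)(h_i/x_i)$ is nonnegative and equals $|x_ig_i|\,|h_i|/\lambda$. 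Setting $c:=\min_i |x_i g_i|/\lambda>0$, this yields $\nabla f(x)^\top h\ge c\|h\|_1$.

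Differentiability of $f$ at $x$ gives $f(x+h)=f(x)+\nabla f(x)^\top h+o(\|h\|)$. Choose a radius $\delta$ such that $|o(\|h\|)|\le \tfrac c2\|h\|_1$ whenever $\|h\|<\delta$; this is possible because all norms on $\Re^n$ are equivalent. Then for every $y=x+h\in C$ with $0<\|h\|<\delta$ we get $f(y)\ge f(x)+\tfrac c2\|h\|_1>f(x)$. So $x$ is a strict local minimum of $f$ over $C$.

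I do not expect a real obstacle here. The only step needing care is the sign bookkeeping in (b), namely showing that the strict vertex inequalities force the linear term to dominate uniformly over all feasible directions rather than only along the coordinate axes. The decomposition above into nonnegative terms with a uniform constant $c$ handles exactly this. Note that only differentiability at $x$ is used; no $C^1$ assumption is needed.
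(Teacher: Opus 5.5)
Your proof is correct and follows essentially the same route as the paper. In (a) you use the same inward edge direction $-x_i\e_i/\lambda$ and one-sided directional derivative. In (b) you use the same first-order expansion together with a uniform positive lower bound on $\nabla f(x)^\top h$ over the feasible cone at the vertex. The only difference is how you get that bound: you derive it explicitly as $\nabla f(x)^\top h \ge \min_i |\partial_i f(x)|\,\|h\|_1$. The paper instead obtains $\delta>0$ by minimizing $\nabla f(x)^\top d$ over the compact set of unit feasible directions, so your version gives a concrete constant and dispenses with the compactness step.
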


\begin{proof}
Since every constraint $-\lambda\leq x_i\leq\lambda$ is active at a vertex, the
cone of feasible directions at~$x$ is
\[
    \cF_x \;=\; \bigl\{\, d\in\Re^n : x_i\, d_i \leq 0 \;\;\forall\, i \bigr\}.
\]

\textbf{(a)\;Necessary condition.}
Fix $i\in\{1,\dots,n\}$ and set $d=-x_i\, e_i/\lambda$, where $e_i$ is the
$i$-th standard basis vector.  Then $x_i\, d_i = -x_i^2/\lambda = -\lambda < 0$,
so $d\in\cF_x$ and $x+td\in C$ for all sufficiently small $t>0$.
Because $x$ is a local minimum,
\[
    0 \;\leq\; \lim_{t\downarrow 0}\frac{f(x+td)-f(x)}{t}
      \;=\; \nabla f(x)^{\!\top} d
      \;=\; -\frac{x_i}{\lambda}\,\frac{\partial f}{\partial x_i}(x),
\]
which gives $x_i\,\dfrac{\partial f}{\partial x_i}(x)\leq 0$.

\textbf{(b)\;Sufficient condition.}
Assume~\eqref{eq:sufficient_cond} holds.  Let
$S = \{\, d\in\cF_x : \|d\|=1\}$.
For every $d\in S$ and every index~$i$, the factor
$\tfrac{\partial f}{\partial x_i}(x)$ has the opposite sign to~$x_i$
(by~\eqref{eq:sufficient_cond}), while $d_i$ also has the opposite sign
to~$x_i$ or is zero (by feasibility); hence each product
$\frac{\partial f}{\partial x_i}(x)\, d_i \geq 0$, with strict inequality
for at least one index (since $\|d\|=1$).  Therefore
$\nabla f(x)^{\!\top} d > 0$ on the compact set~$S$, and
\[
    \delta \;:=\; \min_{d\in S}\; \nabla f(x)^{\!\top} d \;>\; 0.
\]
Now take any $y\in C\setminus\{x\}$ and write $r=\|y-x\|$.  By differentiability,
\[
    f(y)-f(x)
      \;=\; r\cdot  \nabla f(x)^{\!\top}(y-x)/r \;+\; o(r)
      \;\geq\; r\,\delta + o(r).
\]
For $r>0$ small enough, $r\,\delta + o(r) > 0$, so $f(y)>f(x)$.
\end{proof}

\subsection{Proof of Lemma~\ref{lemma:S_hat_in_corner}}
\textbf{Re-statement:}
If $f_\theta$ satisfies \eqref{eq:attractor_class_final_2}, then every local minimizer of~\eqref{eq:IsingH} lies on the hypercube corners $\lambda \bbH_n=\{-\lambda,\lambda\}^n$, i.e.
\begin{equation}
    \loc_{\lambda\bbQ_n}\!(\cH)\subset \lambda \bbH_n.
\end{equation}

\begin{proof}
Let $\x^*$ be a local minimizer of~\eqref{eq:IsingH} such that $\x^*\notin\lambda \bbH_n$. Then there exists at least one coordinate $i$ with $x_i^*\in(-\lambda,\lambda)$.
Since $\x^*$ lies in the interior with respect to the $i$-th coordinate, the (unconstrained) second-order necessary condition (since $\cH$ is twice-differentiable) for a local minimum implies
\[
    \frac{\partial^2}{\partial x_i^2}\cH(\x^*) \;\geq\; 0.
\]
Using $\cH(\x)=\cE(\x)+\cA_f(\x)$ with $\cA_f(\x)=\sum_{j=1}^n f_\theta(x_j)$ and $\cE(\x)=-\frac12\x^\top\! \J \x$, we have
\[
    \frac{\partial^2}{\partial x_i^2}\cH(\x)
    =
    \frac{\partial^2}{\partial x_i^2}\cE(\x)
    +
    \frac{\partial^2}{\partial x_i^2}\cA_f(\x)
    =
    -J_{ii} + f_\theta''(x_i).
\]
Now for Ising model $J_{ii}=0$ (no self-coupling) hence,
\[
    \frac{\partial^2}{\partial x_i^2}\cH(\x^*)
    =
    f_\theta''(x_i^*)
    < 0,
\]
contradicting the local minimality of $\x^*$ along the $i$-th direction. Hence, no such $\x^*$ can exist, and every local minimizer must satisfy $\x^*\in\lambda \bbH_n$.
\end{proof}

\subsection{Proof of Lemma~\ref{lemma:S_star_subset_S_hat}}
\textbf{Re-statement:}
 Let $f_\theta$ satisfies \eqref{eq:attractor_class_final_1} and \eqref{eq:attractor_class_final_3}, then every discrete one-flip local minimizer $\s^*\in \loc_{\bbH_n}\!(\cE)$ induces a local minimizer $\x^*=\lambda\s^*$ of~\eqref{eq:IsingH}. Equivalently, 
\begin{equation}
    \lambda \!\loc_{\bbH_n}\!(\cE)\subset \loc_{\lambda\bbQ_n}\!(\cH).
\end{equation}

\begin{proof}
Let $\x\in \lambda \!\loc_{\bbH_n}\!(\cE)$, so $\x=\lambda\s$ for some $\s\in \loc_{\bbH_n}\!(\cE)$ and hence $\sign(\x)=\s$.
From~\eqref{eq:local_min_con_Ising}, $\s\in \loc_{\bbH_n}\!(\cE)$ implies
\begin{equation}\label{eq:sign_Js_local}
    s_i(\J\s)_i \geq 0,\qquad \forall i.
\end{equation}
Multiplying by $\lambda^2$ and using $\x=\lambda\s$ gives
\begin{equation}\label{eq:xJx_nonneg}
    x_i(\J\x)_i \geq 0,\qquad \forall i.
\end{equation}

Now consider the constrained first-order optimality condition for $\x\in\lambda \bbQ_n$. From Lemma~\ref{lemma:local_opt_cube}, a corner point $x\in \lambda \bbH_n$ will be a strict local minimizer if
\begin{equation}\label{eq:corner_first_order_condition}
    x_i\frac{\partial}{\partial x_i}\cH(\x) < 0,\qquad \forall i.
\end{equation}
Since $\cH(\x)=\cE(\x)+\sum_{j=1}^n f_\theta(x_j)$ and $\nabla \cE(\x)=-\J\x$, we have
\[
    \frac{\partial}{\partial x_i}\cH(\x)
    =
    -(\J\x)_i + f_\theta'(x_i).
\]
Thus,
\begin{align*}
    x_i\frac{\partial}{\partial x_i}\cH(\x)
    &= x_i\big(-(\J\x)_i + f_\theta'(x_i)\big)\\
    &= -x_i(\J\x)_i + x_if_\theta'(x_i)\\
    &\leq x_if_\theta'(x_i),
\end{align*}
where we used~\eqref{eq:xJx_nonneg}. Since $f_\theta$ is even (from \eqref{eq:attractor_class_final_1}) and differentiable, $f_\theta'$ is odd, and therefore
\[
    x_if_\theta'(x_i) = \lambda f_\theta'(\lambda) < 0,
    \qquad \forall x_i\in\{\pm\lambda\},
\]
by~\eqref{eq:attractor_class_final_3}. Hence~\eqref{eq:corner_first_order_condition} holds for all $i$, proving that $\x$ is a local minimizer of~\eqref{eq:IsingH}.
\end{proof}

\subsection{Proof of Lemma~\ref{lemma:S_hat_subset_S_star}}
\textbf{Re-statement:}
Suppose $f_\theta$ satisfies \eqref{eq:attractor_class_final_1}, \eqref{eq:attractor_class_final_2} and \eqref{eq:attractor_class_final_4}, then every local minimizer $\x^*\in\loc_{\lambda\bbQ_n}\!(\cH)$ induces a discrete one-flip local minimizer $\sign(\x^*)\in \loc_{\bbH_n}\!(\cE)$, i.e.
\begin{equation}
    \loc_{\lambda\bbQ_n}\!(\cH)\subset \lambda \!\loc_{\bbH_n}\!(\cE).
\end{equation}

\begin{proof}
Let $\x\in\loc_{\lambda\bbQ_n}\!(\cH)$. By \eqref{eq:attractor_class_final_2} Lemma~\ref{lemma:S_hat_in_corner} holds so, $\x\in\lambda \bbH_n$, and $x_i\in\{\pm\lambda\}$ for all $i$.
Let $\s:=\sign(\x)\in \bbH_n$, so $\x=\lambda\s$.

Assume, for contradiction, that $\s\notin \loc_{\bbH_n}\!(\cE)$. Then by definition of $\loc_{\bbH_n}\!(\cE)$ and~\eqref{eq:local_min_con_Ising}, there exists an index $i$ such that
\begin{equation}\label{eq:violating_index}
    s_i(\J\s)_i < 0.
\end{equation}
By the definition of $\gamma$ in~\eqref{eq:define_gamma}, this implies
\begin{equation}\label{eq:gamma_lower_bound}
    |(\J\s)_i|\geq \gamma.
\end{equation}
Multiplying~\eqref{eq:gamma_lower_bound} by $-\lambda^2$ and using $\x=\lambda\s$ yields
\begin{equation}\label{eq:xJx_violate}
    x_i(\J\x)_i
    = \lambda^2 s_i(\J\s)_i
    \leq -\lambda^2\gamma.
\end{equation}

Since $\x$ is a constrained local minimizer on $\lambda \bbQ_n$ and lies on the boundary in every coordinate, from Lemma~\ref{lemma:local_opt_cube} it must satisfy the first-order condition:
\[
    x_i\frac{\partial}{\partial x_i}\cH(\x)\leq 0.
\]
Using $\partial_{x_i}\cH(\x)=-(\J\x)_i + f_\theta'(x_i)$, we obtain
\begin{align*}
    0
    &\geq
    x_i\frac{\partial}{\partial x_i}\cH(\x)\\
    &=
    x_i\big(-(\J\x)_i + f_\theta'(x_i)\big)\\
    &=
    -x_i(\J\x)_i + x_if_\theta'(x_i)\\
    &\geq
    \lambda^2\gamma + x_if_\theta'(x_i),
    \qquad \text{using~\eqref{eq:xJx_violate}.}
\end{align*}
Since $x_i\in\{\pm\lambda\}$ and $f_\theta'$ is odd (from \eqref{eq:attractor_class_final_1}), we have $x_if_\theta'(x_i)=\lambda f_\theta'(\lambda)$. Hence
\[
    0 \geq \lambda^2\gamma + \lambda f_\theta'(\lambda)
    \quad \Longleftrightarrow \quad
    f_\theta'(\lambda) + \lambda\gamma \leq 0,
\]
which contradicts~\eqref{eq:attractor_class_final_4}. Therefore $\s\in \loc_{\bbH_n}\!(\cE)$, and hence $\x\in \lambda \!\loc_{\bbH_n}\!(\cE)$.
\end{proof}

\subsection{Proof of Theorem~\ref{thm:S_hat_equals_S_star}}
\textbf{Re-statement:}
Assuming $f_\theta$ is admissible (it satisfies~\eqref{eq:attractor_class_final_1} to \eqref{eq:attractor_class_final_4}), then
\begin{equation}
    \loc_{\lambda\bbQ_n}\!(\cH) = \lambda \!\loc_{\bbH_n}\!(\cE),
\end{equation}
i.e. $\x^*\in\loc_{\lambda\bbQ_n}\!(\cH)$ if and only if $\sign(\x^*)\in \loc_{\bbH_n}\!(\cE)$.

\begin{proof}
The assumptions~\eqref{eq:attractor_class_final_3} and \eqref{eq:attractor_class_final_4} together imply two distinct inequalities required by the supporting lemmas.

Assumption~\eqref{eq:attractor_class_final_1} and \eqref{eq:attractor_class_final_3} satisfies the condition of Lemma~\ref{lemma:S_star_subset_S_hat}, ensuring that every discrete minimum induces a continuous minimum:
\[
\lambda \!\loc_{\bbH_n}\!(\cE) \subset \loc_{\lambda\bbQ_n}\!(\cH).
\]

Assumption~\eqref{eq:attractor_class_final_1} and \eqref{eq:attractor_class_final_4} satisfies the conditions of Lemma~\ref{lemma:S_hat_subset_S_star}, ensuring that every continuous minimum induces a discrete minimum:
\[
\loc_{\lambda\bbQ_n}\!(\cH) \subset \lambda \!\loc_{\bbH_n}\!(\cE).
\]

Combining both inclusions yields $\loc_{\lambda\bbQ_n}\!(\cH) = \lambda \!\loc_{\bbH_n}\!(\cE)$. By definition of $\lambda \!\loc_{\bbH_n}\!(\cE)$, this establishes the equivalence: 

\[
\x^* \in \loc_{\lambda\bbQ_n}\!(\cH) \iff \sign(\x^*) \in \loc_{\bbH_n}\!(\cE).
\]

\end{proof}

\section{MAX-CUT and GW-SDP}\label{sec:MAX-CUT_and_GW-SDP}

In this section, we provide the detailed reformulation of the MAX-CUT problem into the Ising framework and outline the Goemans-Williamson Semidefinite Programming (GW-SDP) relaxation~\cite{goemans1995}, which serves as a standard benchmark for approximation quality.

We consider an undirected weighted graph $G = (V, E)$ with a weighted adjacency matrix $\W \in \Re^{n \times n}$, where $W_{ij} = W_{ji}$ represents the weight of the edge between vertices $i$ and $j$, and $W_{ij}=0$ if no edge exists. We seek a partition of $V$ into two disjoint sets $A$ and $B$ to maximize the total weight of edges crossing the partition.

\subsection{Ising Model Formulation}
Using the spin encoding $\s \in  \bbH_n$, we assign $s_i = +1$ if $i \in A$ and $s_i = -1$ if $i \in B$. The indicator function for an edge $(i,j)$ being in the cut (i.e., $\mathbbold{1}_{\{s_i \neq s_j\}}$) can be written algebraically as $\frac{1 - s_i s_j}{2}$. Consequently, the cut value in \eqref{eq:maxcut} becomes:
\begin{align}
    \mathrm{Cut}(\s) &= \sum_{(i,j) \in E} W_{ij} \left( \frac{1 - s_i s_j}{2} \right) \nonumber \\
    &= \frac{1}{2} \sum_{(i,j) \in E} W_{ij} - \frac{1}{2} \sum_{(i,j) \in E} W_{ij} s_i s_j \nonumber \\
    &= \mathrm{constant} - \frac{1}{4} \s^\top\! \W \s.
    \label{eq:maxcut_derivation}
\end{align}
Maximizing the cut value is therefore equivalent to minimizing the quadratic form $\s^\top\! \W \s$. By defining the interaction matrix as $\J = -(1/2)\W$ (as established in \Cref{sec:Reduction_to_the_Homogeneous_Model}), the optimization problem becomes equivalent to finding the ground state of the Ising energy:
\begin{equation}
    \max_{\s} \mathrm{Cut}(\s) \iff \min_{\s} \frac{1}{4} \s^\top\! \W \s \iff \min_{\s} -\frac{1}{2} \s^\top\! \J \s.
\end{equation}

\subsection{Semidefinite Relaxation}
The GW-SDP relaxation relies on lifting the vector variable $\s$ into a matrix space. We define the matrix variable $\X = \s\s^\top$. The objective function can then be rewritten using the matrix trace operator:
\begin{equation}
    \s^\top\! \W \s = \mathrm{Trace}(\W \s\s^\top) = \mathrm{Trace}(\W \X).
\end{equation}
The constraint $\s \in  \bbH_n$ implies that $\X$ must be a rank-one positive semidefinite matrix ($\X \succeq \0$) with diagonal elements $X_{ii} = s_i^2 = 1$. The non-convexity of the original problem is entirely captured by the rank-one constraint. By dropping this constraint, we obtain the convex Semidefinite Program (SDP):
\begin{equation}
\label{eq:sdp}
    \begin{aligned}
    & \underset{\X}{\text{minimize}}
    & & \mathrm{Trace}(\W \X) \\
    & \text{subject to}
    & & \X \succeq \0, \\
    & & & X_{ii} = 1, \quad \forall i=1,\dots,n.
    \end{aligned}
\end{equation}
This relaxation can be solved in polynomial time using interior-point methods. However, the computational cost scales as $\cO(n^{4.5})$ or $\cO(n^{3.5})$ depending on the solver implementation~\cite{Luo2010}, which restricts its scalability to graphs with approximately $10^3$ to $10^4$ vertices.

Once the optimal matrix $\X^*$ is obtained from~\eqref{eq:sdp}, a valid discrete spin configuration $\s$ is recovered via a randomized rounding scheme (Gaussian rounding). Goemans and Williamson proved that the expected cut value is at least $87.8\%$ of the optimal MAC-CUT value.

\section{Physics Inspired Class of Attractor Functions}
\label{sec:Physics_Inspired_Class_of_Attractor_Functions}

\subsection{Connection to Nonlinear Parametric Oscillators}

In this section, we discuss the physical motivation behind our choice of attractor functions. Specifically, we consider the general class of polynomial attractors $\cA_f(\x) = \sum_{i=1}^n f_\theta(x_i)$, where the scalar potential $f_\theta$ is defined as:
\begin{equation}
\label{eq:attractor_poly_kpo}
    f_\theta(x) = \frac{\beta}{2k+2}x^{2k+2} - \frac{\alpha}{2k}x^{2k},
    \quad \theta=(\alpha, \beta, k) > 0\,,  k \in \Na,
\end{equation}

We show that for the specific case of $k=1$, the resulting Hamiltonian corresponds directly to the effective Hamiltonian of the Kerr-nonlinear parametric oscillator (KPO) networks~\cite{goto2018}.

The Hamiltonian for a network of KPOs is generally given by:
\begin{equation}
\label{eq:kpo_hamiltonian_general}
    \cH_c(\x,\y,t) = \sum_{i=1}^{N} \left( \frac{K}{4} (x_i^2 + y_i^2)^2 - \frac{p(t)}{2} (x_i^2 - y_i^2) + \frac{\Delta_i}{2} (x_i^2 + y_i^2) \right) - \frac{\xi_0}{2} \sum_{i=1}^{N} \sum_{j=1}^{N} J_{ij} (x_i x_j + y_i y_j),
\end{equation}
where $\xi_0$ is a positive coupling constant, $K$ is the Kerr nonlinearity coefficient, $p(t)$ is the time-dependent parametric pumping amplitude, $\Delta_i$ is the detuning frequency of the $i$-th oscillator, $x_i$ represents the in-phase amplitude (analogous to the spin state), and $y_i$ represents the quadrature amplitude (analogous to momentum).

By considering the simplified static regime where the quadrature components are suppressed ($y_i=0$) and assuming uniform detuning ($\Delta_i = \Delta$), Equation~\eqref{eq:kpo_hamiltonian_general} reduces to:
\begin{align} 
\label{eq:kpo_hamiltonian_reduced}
    \cH_{KPO}(\x, \0, t) &= \sum_{i=1}^{N} \left( \frac{K}{4} x_i^4 - \frac{p(t)-\Delta}{2} x_i^2\right) - \frac{\xi_0}{2} \sum_{i=1}^{N} \sum_{j=1}^{N} J_{ij} x_i x_j \nonumber \\
    &= \sum_{i=1}^{N} \left( \underbrace{\frac{K}{4} x_i^4 - \frac{p(t)-\Delta}{2} x_i^2}_{f_\theta(x_i)} \right) - \frac{\xi_0}{2}\x^\top\! \J \x.
\end{align}
Comparing the inner sum to the class of attractor function~\eqref{eq:attractor_poly_kpo} directly yields
\begin{align*}
    \alpha = p(t)-\Delta,\quad \beta = K,\quad k = 1.
\end{align*}

\subsection{Connection to Mean-Field Annealing on Ising Model}

In this section, we demonstrate that the attractor defined by:
\begin{equation}
\label{eq:attractor_mfa}
    f_\theta(x) = \frac{1}{\beta} \int_0^x \tanh^{-1}\Big(\frac{t}{\lambda}\Big) dt - \frac{\alpha}{2}x^2,
    \quad \theta=(\alpha,\beta)>0,
\end{equation}
naturally arises from the mean-field approximation of the Ising model, establishing a rigorous link between our continuous relaxation and statistical mechanics approaches.

Consider the thermal equilibrium of the Ising model described by the Gibbs distribution $P(\s) \propto \exp(-\beta \cE(\s))$, where $\beta$ represents the inverse temperature. In the mean-field approximation, spins are assumed to fluctuate independently in an effective local field created by their neighbours. The expectation value of the $i$-th spin, denoted by $x_i = \langle s_i \rangle \in \{-\lambda, \lambda\}$, satisfies the well-known self-consistency equation~\cite{King2018}:
\begin{equation}
\label{eq:mfa_consistency}
    x_i = \lambda \tanh\left( h_ix_i + \beta \sum_{j} J_{ij} x_j \right).
\end{equation}
Solving the combinatorial problem via Mean-Field Annealing (MFA) typically involves iterating this fixed-point equation while gradually increasing $\beta$ (lowering temperature).

We now examine the stationary points of our Hamiltonian $\cH(\x) = \cE(\x) + \cA_f(\x)$ using the attractor in~\eqref{eq:attractor_mfa}. The gradient condition for a stationary point is $\nabla \cH(\x) = \0$. Setting the total gradient to zero yields:
\begin{align}
    -\J\x + \frac{1}{\beta} \tanh^{-1}\Big(\frac{\x}{\lambda}\Big) - \alpha\x &= \0 \nonumber \\
    \frac{1}{\beta} \tanh^{-1}\Big(\frac{\x}{\lambda}\Big) &= (\J + \alpha \I)\x.
\end{align}
Rearranging for $\x$, we apply the hyperbolic tangent function to both sides:
\begin{equation}
\label{eq:attractor_fixed_point}
    \x = \lambda \tanh\Big( \beta (\J + \alpha\I)\x \Big).
\end{equation}

Thus, the stationary points of our continuously relaxed objective with the attractor~\eqref{eq:attractor_mfa} correspond exactly to the fixed points of the Mean-Field Annealing equations, with the parameter $\alpha$ acting as a constant external magnetic field equal to $h/\beta$.

\section{Adaptive Hyperparameter Tuning for MiP-CRIM}
\label{sec:appendix_param_tuning}

The performance of our continuous Ising solvers depends on the shape of the energy landscape, controlled by the parameters $\alpha$ and $\beta$ of the quartic attractor $f_{\theta_2}$ in~\eqref{eq:attractor_f_ex_2}, as well as the optimization learning rate $\tau$ (for ADAM) and box radius $\lambda$ in \eqref{eq:IsingH}. To ensure we operate within the theoretically valid regime (Remark~\ref{rem:S_hat_equals_S_star}) while maximizing solution quality, we employ a multi-resolution adaptive grid search.

\paragraph{Admissible Search Space.}
For the polynomial attractor $f_\theta(x) = \frac{\beta}{4}x^4 -\frac{\alpha}{2}x^2$, admissibility (as per Definition~\ref{def:admissible_attractor} and Remark~\ref{rem:S_hat_equals_S_star}) requires:
\begin{equation}
    3\beta\lambda^2 < \alpha < \beta\lambda^2 + \gamma.
\end{equation}
Rather than searching for $\alpha$ and $\lambda$ independently, we fix a baseline $\gamma_0$ (as per Proposition~\ref{prop:general_low_bound_gamma} and Remark~\ref{rem:discuss_gamma}) and sweep over $\beta$ with $\gamma \ge \gamma_0$. This allows us to derive the dependent parameters:
\[
\lambda = \sqrt{\frac{\gamma}{2\beta}}, \quad \alpha \in (3\beta\lambda^2, \beta\lambda^2 + \gamma).
\]
This parameterization guarantees that every sampled configuration satisfies the advisability conditions \eqref{eq:attractor_class_final_1}--\eqref{eq:attractor_class_final_4} of our theory. We fix the exponential decay rate parameters for ADAM to $0.09$ for the 1st moment, $0.999$ for the 2nd moment for all experiments. We choose the noise scaling $\sigma$ between $10^{-3}$ and $10^{-1}$ based on the problem scale.

\paragraph{Adaptive Refinement Strategy.}
We use a ``zoom-in'' strategy over $R$ rounds to fine-tune the grid:
\begin{enumerate}
    \item \textbf{Initialization:} We define initial ranges for the learning rate $\tau \in (0, \tau_0]$, stiffness $\beta \in (0, \beta_0]$, and effective margin lower bound $\gamma_0 > 0$.
    \item \textbf{Coarse Sweep:} In each round, we discretize the current interval into a small coarse grid (3 to 5 points along each dimension for efficiency) and run a short solver pass at each grid point.

    \item \textbf{Selection:} We select the configuration that yields the lowest energy state.
    \item \textbf{Refinement:} The search range for the next round is centered around the best parameters from the current round, with the window size reduced by half (or doubled if the optimal value lies on the boundary).
\end{enumerate}
This approach, implemented on GPU, allows us to rapidly converge to instance-specific optimal hyperparameters (typically within seconds), balancing the exploration-exploitation trade-off required for challenging NP-hard benchmarks.

\section{Additional Experiments}
\label{sec:aditional_experiments}

\subsection{MAX-CUT Results on G-set graphs}\label{sec:appendix_maxcut_gsets}

We evaluated MiP-CRIM on the standard G-set benchmark using the adaptive strategy described above. For these experiments, we fix the search hyperparameters to $R=10$ refinement rounds, with each round consisting of $K=200$ epochs and $T=10$ ADAM iterations (per epoch). The noise scaling is set to $\sigma = 10^{-3}$. The resulting instance-specific hyperparameters ($\alpha, \beta, \lambda, \gamma$) for the polynomial attractor function $f_{\theta_2}$, along with the computed cut values and the best-known cut-values (from~\cite{maxcut2017multiple}) are reported in Table~\ref{tab:appendix-gset-details} below.

\begin{center}
\begin{small}
\begin{longtable}{lcccccccc}
\caption{Parameter settings and results of MiP-CRIM for MAX-CUT on G-set graphs. Best-known values are taken from~\cite{maxcut2017multiple}. The results are obtained using NVIDIA L40S GPU with Intel Xeon 6520P CPU.} 
\label{tab:appendix-gset-details} \\
\toprule
G-set & Nodes & $\alpha$ & $\beta$ & $\lambda$ & $\gamma$ & Best-Known & Cut Value & $\sync$ \\
\midrule
\endfirsthead

\multicolumn{9}{c}%
{{\bfseries \tablename\ \thetable{} -- continued from previous page}} \\
\toprule
G-set & Nodes & $\alpha$ & $\beta$ & $\lambda$ & $\gamma$ & Best-Known & Cut Value & Sync \\
\midrule
\endhead

\bottomrule
\multicolumn{9}{r}{{Continued on next page}} \\
\endfoot

\bottomrule
\endlastfoot

G1 & 800 & 0.0094 & 5.00 & 0.0250 & 0.0063 & 11624 & 11609 & 797 \\
G2 & 800 & 0.0563 & 10.00 & 0.0306 & 0.0750 & 11620 & 11606 & 795 \\
G3 & 800 & 0.0211 & 2.50 & 0.0306 & 0.0188 & 11622 & 11622 & 797 \\
G4 & 800 & 0.0750 & 5.00 & 0.0707 & 0.2000 & 11646 & 11578 & 799 \\
G10 & 800 & 0.0281 & 3.75 & 0.0354 & 0.0375 & 2000 & 2000 & 797 \\
G11 & 800 & 0.0750 & 5.00 & 0.0500 & 0.1000 & 564 & 560 & 737 \\
G12 & 800 & 0.0375 & 3.75 & 0.0408 & 0.0500 & 556 & 554 & 729 \\
G13 & 800 & 0.1500 & 5.00 & 0.1000 & 0.1000 & 582 & 574 & 737 \\
G14 & 800 & 0.0281 & 3.75 & 0.0354 & 0.0375 & 3064 & 3058 & 763 \\
G15 & 800 & 0.0422 & 3.75 & 0.0354 & 0.0375 & 3050 & 3037 & 792 \\
G16 & 800 & 0.0375 & 7.50 & 0.0408 & 0.1000 & 3052 & 3045 & 757 \\
G17 & 800 & 0.0563 & 3.75 & 0.0500 & 0.0750 & 3047 & 3036 & 770 \\
G18 & 800 & 0.0188 & 2.50 & 0.0500 & 0.0125 & 992 & 985 & 779 \\
G19 & 800 & 0.1500 & 30.00 & 0.0408 & 0.4000 & 906 & 903 & 765 \\
G20 & 800 & 0.1125 & 40.00 & 0.0306 & 0.0750 & 941 & 937 & 761 \\
G21 & 800 & 0.1125 & 5.00 & 0.0500 & 0.1000 & 931 & 921 & 765 \\
G22 & 2000 & 0.1500 & 5.00 & 0.1000 & 0.1000 & 13359 & 13226 & 1965 \\
G23 & 2000 & 0.0563 & 10.00 & 0.0306 & 0.0750 & 13344 & 13304 & 1973 \\
G24 & 2000 & 0.0070 & 1.25 & 0.0306 & 0.0094 & 13337 & 13294 & 1972 \\
G25 & 2000 & 0.1500 & 10.00 & 0.0500 & 0.2000 & 13340 & 13310 & 1976 \\
G26 & 2000 & 0.0105 & 0.63 & 0.0433 & 0.0094 & 13328 & 13286 & 1971 \\
G27 & 2000 & 0.0375 & 20.00 & 0.0250 & 0.1000 & 3341 & 3300 & 1946 \\
G28 & 2000 & 0.0750 & 20.00 & 0.0250 & 0.1000 & 3298 & 3262 & 1947 \\
G29 & 2000 & 0.0094 & 2.50 & 0.0354 & 0.0250 & 3405 & 3375 & 1965 \\
G30 & 2000 & 0.2250 & 20.00 & 0.0354 & 0.2000 & 3413 & 3389 & 1955 \\
G31 & 2000 & 0.1688 & 20.00 & 0.0306 & 0.1500 & 3310 & 3291 & 1957 \\
G32 & 2000 & 0.0375 & 7.50 & 0.0408 & 0.0250 & 1410 & 1386 & 1820 \\
G33 & 2000 & 0.0750 & 10.00 & 0.0500 & 0.0500 & 1382 & 1356 & 1827 \\
G34 & 2000 & 0.0750 & 10.00 & 0.0500 & 0.0500 & 1384 & 1360 & 1823 \\
G35 & 2000 & 0.0006 & 0.12 & 0.0408 & 0.0016 & 7687 & 7649 & 1897 \\
G36 & 2000 & 0.0750 & 3.75 & 0.0817 & 0.2000 & 7680 & 7622 & 1894 \\
G37 & 2000 & 0.0375 & 5.00 & 0.0500 & 0.1000 & 7691 & 7643 & 1898 \\
G38 & 2000 & 0.1125 & 1.88 & 0.0817 & 0.1000 & 7688 & 7626 & 1908 \\
G39 & 2000 & 0.0750 & 5.00 & 0.0500 & 0.1000 & 2408 & 2350 & 1881 \\
G40 & 2000 & 0.0422 & 3.75 & 0.0354 & 0.0375 & 2400 & 2358 & 1905 \\
G41 & 2000 & 0.0094 & 1.88 & 0.0408 & 0.0250 & 2405 & 2372 & 1866 \\
G42 & 2000 & 0.0281 & 10.00 & 0.0306 & 0.0188 & 2481 & 2446 & 1897 \\
G43 & 1000 & 0.2250 & 15.00 & 0.0408 & 0.2000 & 6660 & 6656 & 992 \\
G44 & 1000 & 0.1500 & 60.00 & 0.0289 & 0.1000 & 6650 & 6645 & 984 \\
G45 & 1000 & 0.0094 & 5.00 & 0.0250 & 0.0250 & 6654 & 6643 & 984 \\
G46 & 1000 & 0.0070 & 0.31 & 0.0866 & 0.0188 & 6649 & 6620 & 994 \\
G47 & 1000 & 0.2250 & 7.50 & 0.0577 & 0.2000 & 6657 & 6625 & 992 \\
G48 & 3000 & 0.0375 & 5.00 & 0.0500 & 0.1000 & 6000 & 6000 & 3000 \\
G49 & 3000 & 0.0563 & 10.00 & 0.0250 & 0.0500 & 6000 & 6000 & 3000 \\
G50 & 3000 & 0.0375 & 20.00 & 0.0250 & 0.0250 & 5880 & 5856 & 2958 \\
G51 & 1000 & 0.1500 & 3.75 & 0.0817 & 0.2000 & 3848 & 3817 & 956 \\
G52 & 1000 & 0.0422 & 2.50 & 0.0433 & 0.0375 & 3851 & 3838 & 957 \\
G53 & 1000 & 0.0375 & 3.75 & 0.0408 & 0.0500 & 3850 & 3838 & 957 \\
G54 & 1000 & 0.0422 & 1.88 & 0.0500 & 0.0375 & 3852 & 3840 & 955 \\
G55 & 5000 & 0.1125 & 5.00 & 0.0500 & 0.1000 & 10299 & 10173 & 4973 \\
G70 & 10000 & 0.300 & 60.00 & 0.040825 & 0.0200 & 9591 & 9531 & 9949 \\
\end{longtable}
\end{small}
\end{center}

\subsection{Ablation Study}
\label{sec:appendix_ablation}

To assess the robustness of MiP-CRIM and validate our adaptive tuning strategy, we conducted an ablation study analyzing the sensitivity of the solver to its three primary hyperparameters: the polynomial attractor coefficient $\alpha$, the box constraint radius $\lambda$, and the optimization learning rate $\tau$. We evaluated performance on three representative benchmarks: the SK Model ($1000$ spins), the dense K2000 graph, and the sparse G-set graph G10. For each parameter sweep, we report both the solution quality (Ising Energy or Cut Value) and the synchronization score ($\sync$) from \eqref{eq:sync}, which measures the alignment of the continuous variables with the discrete one-flip local minima.

\paragraph{Sensitivity to Attractor Shape ($\alpha$)}
The parameter $\alpha$ controls the convexity near the origin and the depth of the discrete wells~\ref{fig:three_attractors_plots}. We swept $\alpha$ within the theoretically valid region $(3\beta\lambda^2, \beta\lambda^2 + \gamma)$ derived in our landscape equivalence analysis.
As shown in Figure~\ref{fig:ablation_alpha}, MiP-CRIM maintains a high synchronization score ($\sync > 0.9$) across the entire valid range, confirming that the solver consistently converges to valid discrete states. However, solution quality exhibits a clear trend: lower values of $\alpha$ (closer to the point $3\beta\lambda^2$) generally yield better energies and cut values. This suggests that a "flatter" landscape near the origin facilitates better exploration of the configuration space before the system settles into a specific basin of attraction.

\begin{figure}[htbp]
    \centering
    \includegraphics[width=\linewidth]{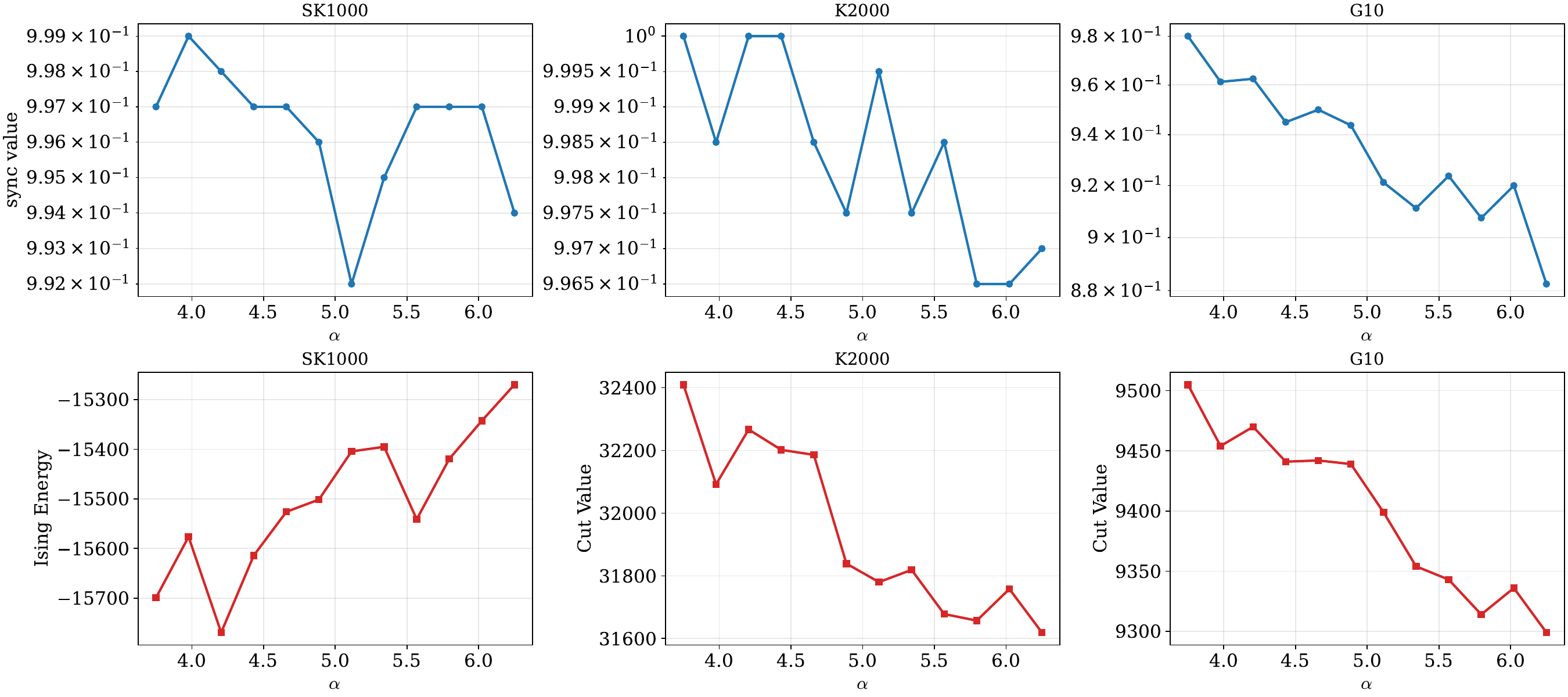}
    \caption{\emph{Sensitivity to Attractor Shape ($\alpha$).} \textit{Top Row:} synchronization score ($\sync$) vs. $\alpha$. \textit{Bottom Row:} Objective value (Ising Energy for SK1000; Cut Value for K2000 / G10). The x-axis represents the valid theoretical range for $\alpha$. Lower $\alpha$ values promote better exploration (higher cuts/lower energy) while maintaining high synchronization score. The results are obtained using NVIDIA L40S GPU with Intel Xeon 6520P CPU.}
    \label{fig:ablation_alpha}
\end{figure}

\paragraph{Sensitivity to Hypercube Length ($\lambda$)}
The parameter $\lambda$ defines the bounds of the continuous relaxation hypercube $\lambda \bbQ_n$ in \eqref{eq:IsingH}. We evaluated the solver's performance across a logarithmic scale $\lambda \in [10^{-3}, 10^3]$.
Figure~\ref{fig:ablation_lambda} reveals a distinct "sweet spot" for performance. For all three datasets, optimal solution quality and perfect synchronization score ($\sync \approx 1.0$) are achieved for $\lambda \in [10^{-1}, 10^0]$.
\begin{itemize}
    \item \textbf{Small $\lambda$ ($< 10^{-1}$):} The solver remains stable ($\sync \approx 1.0$) but performance degrades slightly, likely due to the constrained space limiting the separation between variables.
    \item \textbf{Large $\lambda$ ($> 10^1$):} Performance collapses. The synchronization score drops to $\approx 0.5$ (equivalent to random guessing), and cut values tend toward zero. In this regime, the box constraint becomes loose relative to the polynomial attractor, causing gradients to vanish or the system to drift without bifurcating properly into discrete spins.
\end{itemize}

\begin{figure}[htbp]
    \centering
    \includegraphics[width=\linewidth]{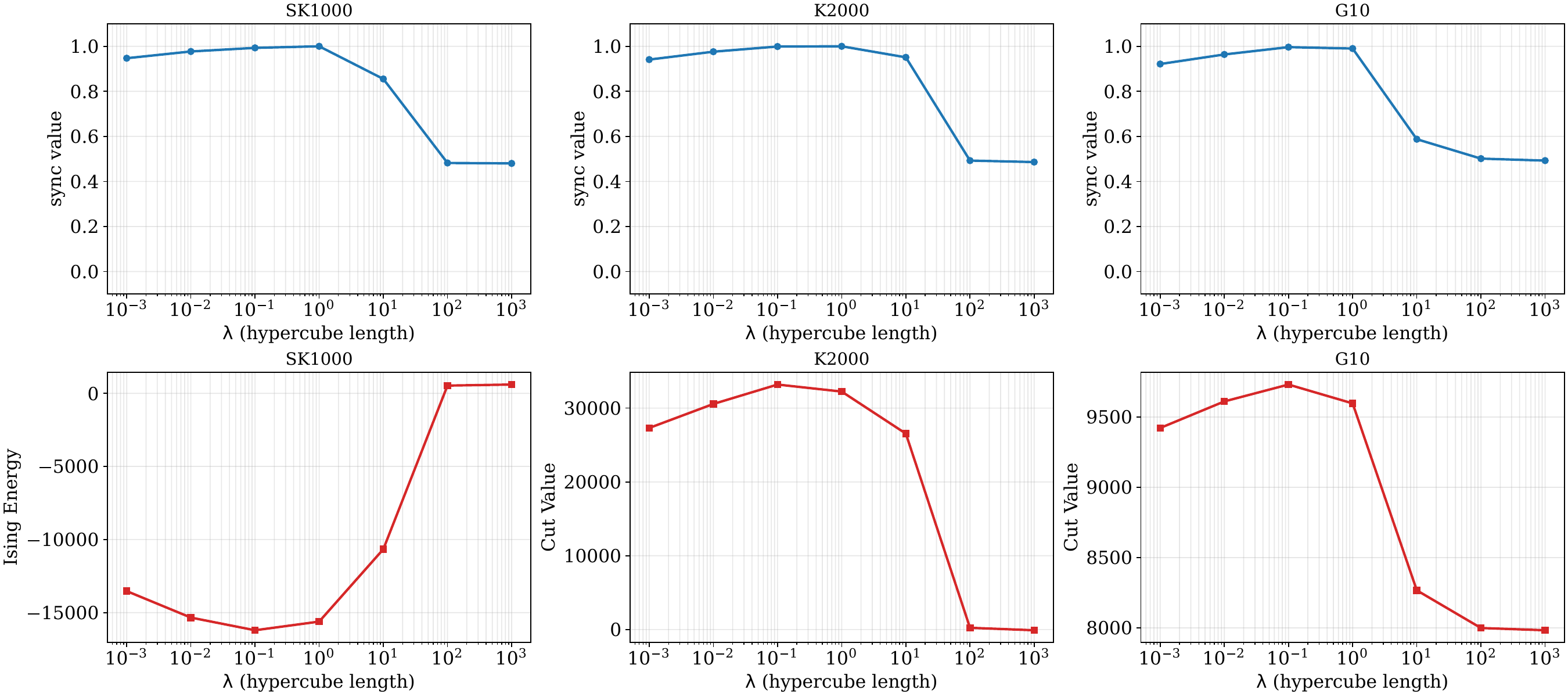}
    \caption{\emph{Sensitivity to Domain Size ($\lambda$).} Performance across varying box radii (log scale). A clear operational window exists for $\lambda \in [0.1, 1.0]$ where the solver achieves both optimal objective values and perfect synchronization score. Large $\lambda$ values lead to a collapse in performance. The results are obtained using NVIDIA L40S GPU with Intel Xeon 6520P CPU.}
    \label{fig:ablation_lambda}
\end{figure}

\paragraph{Sensitivity to Learning Rate ($\tau$).}
Finally, we analyzed the impact of the ADAM optimizer's step size $\tau \in [0.05, 0.95]$. Figure~\ref{fig:ablation_step} demonstrates the remarkable stability of the proposed attractor mechanism to variations in optimization speed. While the synchronization score exhibits minor fluctuations (top row), it remains consistently near-perfect (maintaining values $>0.99$ for SK1000 and K2000, and $>0.98$ for G10) across the entire tested range. This indicates that the deterministic mapping to one-flip local optimal states is robust. In contrast, the final Ising energy or cut value (bottom row) varies significantly with $\tau$. This persistent variability justifies the necessity of the multi-resolution grid search employed in our main experiments (Section~\ref{sec:appendix_param_tuning}) to locate the precise learning rate that maximizes solution quality for a specific instance.

\begin{figure}[htbp]
    \centering
    \includegraphics[width=\linewidth]{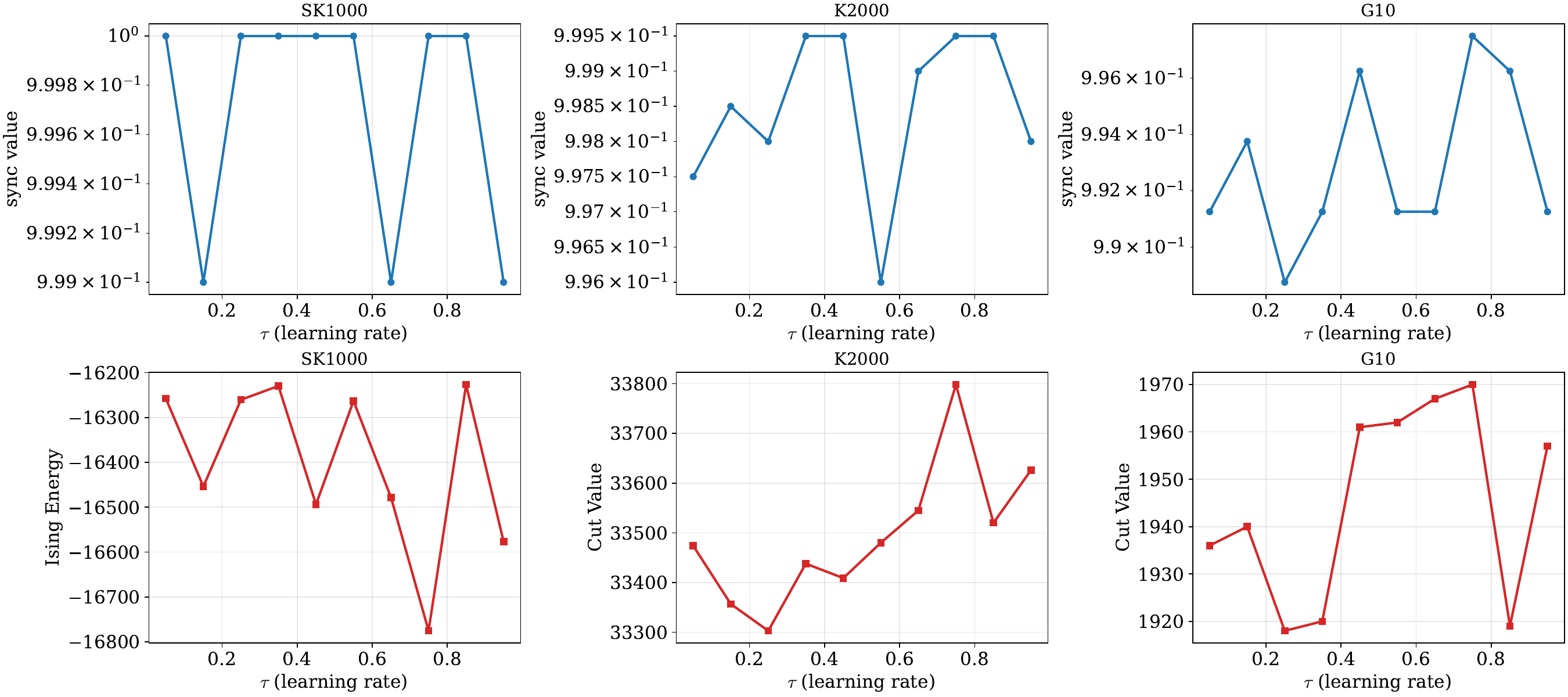}
    \caption{\emph{Sensitivity to Learning Rate ($\tau$).} The synchronization score (top row) is invariant to the learning rate, remaining at 1.0, which highlights the robustness of the MiP-CRIM attractor. Objective values (bottom row) fluctuate, necessitating the adaptive tuning strategy employed in our main experiments. The results are obtained using NVIDIA L40S GPU with Intel Xeon 6520P CPU.}
    \label{fig:ablation_step}
\end{figure}

\end{document}